\DeclareMathOperator{\const}{const}
\DeclareMathOperator{\Aut}{Aut}
\chardef\bslash=`\\ 
\begin{document}

\numberwithin{equation}{section}

\newtheorem{Theorem}{Theorem}[section]

\newtheorem{cor}[Theorem]{Corollary}

\newtheorem{Conjecture}[Theorem]{Conjecture}

\newtheorem{Lemma}[Theorem]{Lemma}
\newtheorem{lemma}[Theorem]{Lemma}
\newtheorem{property}[Theorem]{Property}
\newtheorem{Proposition}[Theorem]{Proposition}
\newtheorem{ax}[Theorem]{Axiom}
\newtheorem{claim}[Theorem]{Claim}

\theoremstyle{definition}
\newtheorem{Definition}[Theorem]{Definition}
\newtheorem{problem}[Theorem]{Problem}
\newtheorem{question}[Theorem]{Question}
\newtheorem{Example}[Theorem]{Example}

\newtheorem{remark}[Theorem]{Remark}
\newtheorem{diagram}{Diagram}
\newtheorem{Remark}[Theorem]{Remark}
\newcommand{\diagref}[1]{diagram~\ref{#1}}
\newcommand{\thmref}[1]{Theorem~\ref{#1}}
\newcommand{\secref}[1]{Section~\ref{#1}}
\newcommand{\subsecref}[1]{Subsection~\ref{#1}}
\newcommand{\lemref}[1]{Lemma~\ref{#1}}
\newcommand{\corref}[1]{Corollary~\ref{#1}}
\newcommand{\exampref}[1]{Example~\ref{#1}}
\newcommand{\remarkref}[1]{Remark~\ref{#1}}
\newcommand{\corlref}[1]{Corollary~\ref{#1}}
\newcommand{\claimref}[1]{Claim~\ref{#1}}
\newcommand{\defnref}[1]{Definition~\ref{#1}}
\newcommand{\propref}[1]{Proposition~\ref{#1}}
\newcommand{\prref}[1]{Property~\ref{#1}}
\newcommand{\itemref}[1]{(\ref{#1})}


\newcommand{\CE}{\mathcal{E}}
\newcommand{\CG}{\mathcal{G}}\newcommand{\CV}{\mathcal{V}}
\newcommand{\CL}{\mathcal{L}}
\newcommand{\CM}{\mathcal{M}}
\newcommand{\A}{\mathcal{A}}
\newcommand{\CO}{\mathcal{O}}
\newcommand{\B}{\mathcal{B}}
\newcommand{\CS}{\mathcal{S}}
\newcommand{\CX}{\mathcal{X}}
\newcommand{\CY}{\mathcal{Y}}
\newcommand{\CT}{\mathcal{T}}
\newcommand{\CW}{\mathcal{W}}
\newcommand{\CJ}{\mathcal{J}}

\newcommand{\st}{\sigma}
\renewcommand{\k}{\varkappa}
\newcommand{\Frac}{\mbox{Frac}}
\newcommand{\X}{\mathcal{X}}
\newcommand{\wt}{\widetilde}
\newcommand{\wh}{\widehat}
\newcommand{\mk}{\medskip}
\renewcommand{\sectionmark}[1]{}
\renewcommand{\Im}{\operatorname{Im}}
\renewcommand{\Re}{\operatorname{Re}}
\newcommand{\la}{\langle}
\newcommand{\ra}{\rangle}
\newcommand{\LND}{\mbox{LND}}
\newcommand{\Pic}{\mbox{Pic}}
\newcommand{\lnd}{\mbox{lnd}}
\newcommand{\GLND}{\mbox{GLND}}\newcommand{\glnd}{\mbox{glnd}}
\newcommand{\Der}{\mbox{DER}}\newcommand{\DER}{\mbox{DER}}
\renewcommand{\th}{\theta}
\newcommand{\ve}{\varepsilon}
\newcommand{\1}{^{-1}}
\newcommand{\iy}{\infty}
\newcommand{\iintl}{\iint\limits}
\newcommand{\capl}{\operatornamewithlimits{\bigcap}\limits}
\newcommand{\cupl}{\operatornamewithlimits{\bigcup}\limits}
\newcommand{\suml}{\sum\limits}
\newcommand{\ord}{\operatorname{ord}}
\newcommand{\bk}{\bigskip}
\newcommand{\fc}{\frac}
\newcommand{\g}{\gamma}
\newcommand{\be}{\beta}
\newcommand{\dl}{\delta}
\newcommand{\Dl}{\Delta}
\newcommand{\lm}{\lambda}
\newcommand{\Lm}{\Lambda}
\newcommand{\om}{\omega}
\newcommand{\ov}{\overline}
\newcommand{\vp}{\varphi}
\newcommand{\kap}{\varkappa}

\newcommand{\Vp}{\Phi}
\newcommand{\Varphi}{\Phi}
\newcommand{\BC}{\mathbb{C}}
\newcommand{\C}{\mathbb{C}}\newcommand{\BP}{\mathbb{P}}
\newcommand{\BQ}{\mathbb {Q}}
\newcommand{\BM}{\mathbb{M}}
\newcommand{\BR}{\mathbb{R}}\newcommand{\BN}{\mathbb{N}}
\newcommand{\BZ}{\mathbb{Z}}\newcommand{\BF}{\mathbb{F}}
\newcommand{\BA}{\mathbb {A}}
\renewcommand{\Im}{\operatorname{Im}}
\newcommand{\id}{\operatorname{id}}
\newcommand{\ep}{\epsilon}
\newcommand{\tp}{\tilde\partial}
\newcommand{\doe}{\overset{\text{def}}{=}}
\newcommand{\supp} {\operatorname{supp}}
\newcommand{\loc} {\operatorname{loc}}
\newcommand{\de}{\partial}
\newcommand{\z}{\zeta}
\renewcommand{\a}{\alpha}
\newcommand{\G}{\Gamma}
\newcommand{\der}{\mbox{DER}}

\newcommand{\Spec}{\operatorname{Spec}}

\newcommand{\tG}{\widetilde G}

\newcommand{\Eq}{\Longleftrightarrow}

\newcommand{\RSL}{\operatorname{SL}}
\newcommand{\PGL}{\operatorname{PGL}}
\newcommand{\RM}{\operatorname{M}}
\newcommand{\rk}{\operatorname{rk}}
\newcommand{\codim}{\operatorname{codim}}
\newcommand{\diag}{\operatorname{diag}}
\newcommand{\ad}{\operatorname{ad}}

\newcommand{\FX}{\mathfrak {X}}
\newcommand{\FV}{\mathfrak {V}}
\newcommand{\Fg}{\mathfrak {g}}
\newcommand{\Fh}{\mathfrak {h}}
\newcommand{\Fb}{\mathfrak {b}}
\newcommand{\Fn}{\mathfrak {n}}
\newcommand{\Fa}{\mathfrak {a}}
\newcommand{\Fz}{\mathfrak {z}}

\newcommand{\Fsl}{\mathfrak {sl}}
\newcommand{\Fpsl}{\mathfrak {psl}}
\newcommand{\Fso}{\mathfrak {so}}
\newcommand{\Fgl}{\mathfrak {gl}}

\newcommand{\SX}{\mathcal {X}}
\newcommand{\SV}{\mathcal {V}}
\newcommand{\SO}{\mathcal {O}}
\newcommand{\SD}{\mathcal {D}}
\newcommand{\Sr}{\rho}
\newcommand{\SR}{\mathcal {R}}

\newcommand{\SL}{\mathcal {L}}
\newcommand{\SF}{\mathcal {F}}

\def\toeq{{\stackrel{\sim}{\longrightarrow}}}

\title {Equations in simple Lie algebras}

\author[Bandman, Gordeev, Kunyavski\u\i , Plotkin] {Tatiana Bandman,
Nikolai Gordeev, Boris Kunyavski\u\i , Eugene Plotkin}
\address{Bandman: Department of
Mathematics, Bar-Ilan University, 52900 Ramat Gan, ISRAEL}
\email{bandman@macs.biu.ac.il}

\address{Gordeev: Department of Mathematics, Herzen State
Pedagogical University, 48 Moika Embankment, 191186, St.Petersburg,
RUSSIA} \email{nickgordeev@mail.ru}

\address{Kunyavski\u\i : Department of
Mathematics, Bar-Ilan University, 52900 Ramat Gan, ISRAEL}
\email{kunyav@macs.biu.ac.il}

\address{Plotkin: Department of Mathematics, Bar-Ilan University, 52900 Ramat Gan,
ISRAEL} \email{plotkin@macs.biu.ac.il}

\begin{abstract}
Given an element $P(X_1,\dots ,X_d)$ of the finitely generated free
Lie algebra $\SL_d$, for any Lie algebra $\Fg$ we can consider the
induced polynomial map $P\colon \Fg^d\to\Fg$. Assuming that $K$ is
an
arbitrary field 
of characteristic $\ne 2$, we prove that if $P$ is
not an identity in $\Fsl(2,K)$, then this map is dominant for any
Chevalley algebra $\Fg$. This result can be viewed as a weak
infinitesimal counterpart of Borel's theorem on the dominancy of the
word map on connected semisimple algebraic groups.

We prove that for the Engel monomials $[[[X,Y],Y],\dots ,Y]$ and,
more generally, for their linear combinations, this map is,
moreover, surjective onto the set of noncentral elements of $\Fg$
provided that the ground field $K$ is big enough, and show that for
monomials of large degree the image of this map contains no nonzero
central elements.

We also discuss consequences of these results for polynomial maps of
associative matrix algebras.
\end{abstract}

\maketitle

\section {Introduction}\label{sec:intro}

For a given element $P(X_1,\dots ,X_d)$ of the finitely generated
free Lie algebra $\SL_d=\SL(X_1,\dots ,X_d)$ defined over a given
field $K$, and a given Lie algebra $\Fg$ over $K$, one can ask the
following question:

\begin{question} \label{quest-lie}
Is the equation
$$
P(X_1,\dots ,X_d)=A
$$
solvable

a) for all $A\in \Fg$,

\noindent or, at least,

b) for a generic $A\in \Fg$?
\end{question}

In the present paper we consider this question in a particular case
of {\it ``classical''} semisimple Lie algebras, i.e., quotients of
Chevalley algebras modulo the centre, see Section \ref{2.1} for
precise definitions. (We are abusing the terminology accepted in the
characteristic zero case in order to distinguish from Lie algebras
of Cartan type appearing in positive characteristics.)

Our motivation is two-fold. The primary inspiration came from widely
discussed group-theoretic analogues of Question \ref{quest-lie}:

\begin{question} \label{quest-group}
Let $w(x_1,\dots ,x_d)$ be an element of the finitely generated free
group $\SF_d=\SF (x_1,\dots ,x_d)$ (i.e., a word in $x_i$ and
$x_i^{-1}$), and let a group $G$ be given. Is the equation
$$
w(x_1,\dots ,x_d)=g
$$
solvable

a) for all $g\in G$,

\noindent or, at least,

b) for a generic $g\in G$?
\end{question}

If $G$ is a connected semisimple algebraic $K$-group, a theorem of
Borel \cite{Bo2}, stating that the word map $G^d\to G$ is dominant
whenever $w\ne 1$, gives a positive answer to part b). One can,
however, easily produce examples where the word map is not
surjective and so the answer to part a) is negative (see \cite{Bo2}
and references therein). Some particular words have been extensively
studied, and Question \ref{quest-group}a has been answered in the
affirmative. Say, if $d=2$ and $w(x,y)=[x,y]$ (the commutator), the
positive answer is known long ago for the connected semisimple
compact topological groups \cite{Got}, connected complex semisimple
Lie groups \cite{PW} and algebraic groups defined over an
algebraically closed field \cite{Ree}, as well as for some simple
groups over reals \cite{Dj} and more general fields \cite{Th1},
\cite{Th2}. In the case where $G$ is a finite (nonabelian) simple
group, Question \ref{quest-group} for this word constitutes an old
problem posed by Ore in 1950s and solved very recently in
\cite{LOST}; on the way to this solution, part b) was thoroughly
investigated, and several different approaches to the definition of
a ``generic'' element have been tried, see, e.g., \cite{Gow},
\cite{EG}, \cite{Sh1} (the last paper contains a survey of some
recent developments).

Similar questions for word maps on simple groups for words more
complicated than the commutator remain widely open (see, however,
\cite{GS}, \cite{Sh2} for new approaches towards part b), and
\cite{BGG}, where a particular case $G=PSL(2,q)$ and $w$ an Engel
word is treated).

The second motivating example is associative algebras, for which
analogues of Questions \ref{quest-lie} and \ref{quest-group} have
also been intensely investigated (first questions of such kind go
back to Kaplansky), see \cite{KBMR} and references therein. In a
sense, the case of Lie algebras treated in the present paper may be
viewed as a sort of ``bridge'' between groups and associative
algebras in what concerns dominancy and surjectivity of polynomial
maps; see Remark \ref{rem:ass} below.

Going over to infinitesimal analogues, one can first mention that
the equation $[X,Y]=A$ is solvable for all $A$ in any classical
split semisimple Lie algebra $\Fg$, under the assumption that the
ground field $K$ is sufficiently large. (Here, of course, brackets
stand for the Lie product.) This fact was established by Brown
\cite{Br}, and in \cite{Hi} estimates on the size of $K$ were
improved.

Our first results (Section \ref{sec:dom}) concern the general case
where we are given an element $P(X_1,\dots ,X_d)$ of the finitely
generated free Lie algebra $\SL_d$ defined over a field $K$. Then
for any Lie algebra $\Fg$ defined over $K$ we can consider the
induced polynomial map $P\colon \Fg^d\to\Fg$. Assuming that $K$ is
an arbitrary field of characteristic $\ne 2$, we prove that if $P$
is not an identity in $\Fsl(2,K)$, then this map is dominant for any
Chevalley algebra $\Fg$. This result can be viewed as a weak
infinitesimal counterpart of Borel's theorem on the dominancy of the
word map on connected semisimple algebraic groups.

Going over from dominancy to surjectivity (Section \ref{sec:sur}),
we prove that for the Engel monomials $[[[X,Y],Y],\dots ,Y]$ and,
more generally, for their linear combinations, the image of the
corresponding map contains the set of noncentral elements of $\Fg$
provided that the ground field $K$ is big enough. We show that for
Engel monomials of large degree this image contains no nonzero
central elements.

We also discuss consequences of these results for polynomial maps of
associative matrix algebras as well as some other possible
generalizations (Section \ref{sec:gen}).

\section{Preliminaries}

Our notation is standard. In particular, $\mathbb Q$ 
and ${\mathbb F}_p$ denote the field of rational numbers 
and the finite field of $p$ elements, respectively. $\mathbb A^n$
denotes the $n$-dimensional affine space. The cardinality of a set
$A$ is denoted by $|A|$. We use the Zariski topology throughout, and
$\bar A$ denotes the closure of $A$. The group of automorphisms of
any object $X$ (group, algebra, variety) is denoted by $\Aut (X)$.
The orbit of an element $h$ of a set $H$ with respect to an action
of a group $W$ is denoted by $Wh$, and $WH'$ denotes the union of
$W$-orbits of all elements of $H'\subseteq H$. If $R$ is a ring on
which a group $G$ acts, $R^G$ denotes the ring of invariants. The
reader is referred to \cite{Bou} and \cite{Hu} for all unexplained
notions and facts concerning Lie algebras.

\subsection{Dominant maps} \label{subsec:dom}
Recall that a $K$-morphism $f\colon V\to W$ of algebraic
$K$-varieties (=reduced $K$-schemes of finite type) is called {\it
dominant} if its image $f(V)$ is Zariski dense in $W$. We will
mostly deal with the case where $V$ and $W$ are irreducible. In such
a case $f(V)$ contains a non-empty open set $U$ (see, e.g.,
\cite[Th.~IV.3.7]{Pe}). If $L/K$ is a field extension, then $f$ is
dominant if and only if the $L$-morphism $f_L\colon V_L\to W_L$
obtained by extension of scalars is dominant.

\subsection{Chevalley and classical Lie algebras} \label{2.1}
Let $\sf R$ be a root system and let $\Pi$ be a simple root system
corresponding to $\sf R$. Further, let $L(\sf R, \C)$ be a
semisimple complex Lie algebra. Then there exists a Chevalley basis
$\{h_\alpha\}_{\alpha\in \Pi}\cup \{e_\beta\}_{\beta \in \sf R}$ of
$L(\sf R,\C)$ such that

1) $[e_\alpha, e_{-\alpha}] = h_\alpha$ for every $\alpha \in \Pi$;

2) $h_\beta :=[e_\beta, e_{-\beta}]\in \sum_{\alpha\in \Pi} \mathbb
Zh_\alpha $ for every $\beta \in \sf R$;

3) $[h_\beta, h_{\gamma}] = 0$ for every $\beta, \gamma \in\sf R$;

4) $[h_\beta, e_\gamma] = q_{\beta ,\gamma}e_\gamma$ for every
$\beta, \gamma\in \sf R$ (note that $q_{\beta ,\gamma} = 0, \pm 1,
\pm2, \pm 3$);

5) $[e_\beta, e_\gamma] = 0$ if $\beta +\gamma \notin \sf R$;

6) $[e_\beta, e_\gamma] = p_{\beta,\gamma}e_{\beta+\gamma}$ if
$\beta +\gamma\in \sf R$ (note that $p_{\beta,\gamma} = \pm 1,\pm
2,\pm 3$).

One can now define the corresponding Lie algebra over any prime
field $F$ using the same basis and relations 1)--6) in the case $F =
\mathbb Q$ or the same basis and relations 1)--6) modulo $p$ in the
case $F={\mathbb F}_p$. Then one can define the Lie algebra $L({\sf
R}, K)$ over any field $K$ using the same basis and relations
induced by 1)--6). We will denote such an algebra by $L({\sf R}, K)$
and call it a {\it Chevalley } algebra. The Chevalley algebra
$L({\sf R}, K)$ decomposes into the sum $\sum_{i}L({\sf R}_i,K)$
where ${\sf R} = \cup_{i}{\sf R}_i$ is the decomposition of the root
system $\sf R$ into the disjoint union of irreducible root
subsystems. The Lie algebras $L({\sf R}_i, K)$ are not simple if the
characteristic of $K$ is not a ``very good prime'' \cite{Ca}.
However, if ${\sf R}_i \ne {\sf A}_1, {\sf B}_r, {\sf C}_r, {\sf
F}_4$ when $\operatorname{char}(K) = 2$ and ${\sf R}_i \ne {\sf
G}_2$ when $\operatorname{char}(K) = 3$, the algebra $L({\sf R}_i,
K)/Z_i$ is simple (here $Z_i$ is the centre of $L({\sf R}_i, K)$).
Thus, if the Lie algebra $L({\sf R}_i, K)$ has no components pointed
out above, the quotient $L({\sf R}, K)/Z$ (where $Z$ is the centre
of $L({\sf R},K)$) is a semisimple Lie algebra.

\medskip

Let $L({\sf R}, K)$ be a Chevalley algebra over a field $K$ which
corresponds to an irreducible reduced root system $\sf R$. Denote by
$\sf R^+$ (resp. $\sf R^-$) the set of positive (resp. negative)
roots and put
$$H= \sum_{\alpha \in \Pi} Kh_\alpha, \,\,\, U^{\pm} = \sum_{\beta \in \sf R^{\pm}}Ke_\beta,\,\,\, U = U^-+U^+.$$
Then
$$L({\sf R}, K) = H + U = H + U^- + U^+.$$
The number $r = \dim H = |\Pi|$ is called the {\it rank } of $L({\sf
R}, K)$.

Let now $i\colon L({\sf R}, K) \rightarrow \operatorname{End}(V)$ be
a linear representation. Then one can construct the corresponding
Chevalley group $G({\sf R},K)\leq GL(V)$, which is generated by the
so-called root subgroups $x_\beta(t)$ (see \cite{St2}, \cite{Bo1}),
and a homomorphism $j\colon G({\sf R},K) \rightarrow
\operatorname{Aut}(i(L({\sf R}, K)))$.

Suppose $K$ is an algebraically closed field and $i$ is a
representation of $L({\sf R}, K)$ such that the group of weights of
$i$ coincides with the group generated by fundamental weights. Then
$G({\sf R},K)$ is a simple, simply connected algebraic group,
$i(L({\sf R}, K))$ is the Lie algebra of $G({\sf R},K)$, and the
homomorphism $j$ defines the adjoint action of $G({\sf R},K)$ on its
Lie algebra $i(L({\sf R}, K))$ \cite[3.3]{Bo1}.

Below we will always consider the Chevalley group $G({\sf R},K)$
constructed through a faithful representation $i$ such that $G({\sf
R},K)$ is simply connected. We also identify the Lie algebra
$i(L({\sf R}, K))$ with $L({\sf R}, K)$. The group $j(G({\sf
R},K))\leq \operatorname{Aut}(L({\sf R}, K))$ will be denoted by
$G$.  Note that $G$ is the group generated by the images
$j(x_\beta(t))$ of the root subgroups which will also be denoted by
$x_\beta(t)$.

An element $x \in L({\sf R}, K)$ is called {\it semisimple} (resp.
{\it nilpotent}) if for a faithful linear representation $\rho\colon
L({\sf R}, K)\rightarrow \operatorname{End}(V)$   the operator
$\rho(x)$  is  semisimple (resp. nilpotent). Every $x \in L({\sf R},
K)$ has the Jordan decomposition $x = x_s + x_n$ where $x_s$ is
semisimple, $x_n$ is nilpotent, $[x_s,x_n]=0$.

Let $K$ be an algebraically closed field. Then:

{\bf a.} {\it  Every element of the Lie algebra $L({\sf R}, K)$ is
$G$-conjugate to an element $x = x_s + x_n$ such that $x_s \in H$,
$x_n\in U^+$, $[x_s,x_n]=0$}.

\medskip

{\bf b.} \cite[II.3.20]{SS} {\it  The $G$-orbit of an element $x \in
L({\sf R}, K)$ is closed if and only if $x$ is semisimple.}

\medskip

{\bf c.} {\it For every root $\beta \in {\sf R}$ there is a linear
map $\beta \colon  H\rightarrow K$ defined by the formula $[h,
e_\beta] = \beta(h)e_\beta$. Suppose there is a regular element $h
\in H$, i.e., $\beta(h) \ne 0$ for every $\beta \in \sf R$. Then the
set of all elements in $L({\sf R}, K)$ which are $G$-conjugate to
elements from $H$ is dense in $L({\sf R},K)$}.

\medskip

{\bf d.} {\it  There is a $G$-equivariant dominant morphism
$$\pi \colon L({\sf R}, K)\rightarrow \sf Y$$
where $ \sf Y$ is an affine variety and the map $$\bar{\pi} =
\pi_{\mid H}\colon H\rightarrow \sf Y$$ satisfies the following
condition:
$$\bar{\pi}^{-1}(\pi(h)) = Wh$$
where $W$ is the Weyl group, which acts naturally on $H$}.

\begin{proof}
Put $L = L({\sf R}, K)$, and let $S = K[L]$ be the algebra of
polynomial functions on $L$ (i.e., the symmetric algebra of the
underlying vector space of $L$). Since $G$ is a simple algebraic
group, $R = S^G$ is finitely generated (see, e.g.,
\cite[Cor.~2.4.10]{Sp}), say, by $f_1, \ldots, f_k$. Consider the
map
$$\pi\colon L\rightarrow {\mathbb A}^k$$
given by the formula $\pi(x) = (f_1(x), \ldots, f_k(x))$. If $x =
x_s +x_n$ is the Jordan decomposition then $\pi(x) = \pi (x_s)$.
Indeed, the $G$-orbit $O_{x_s}$ of $x_s$ is contained in the Zariski
closure $\overline{O_x}$ of the $G$-orbit of $x$ (the fact that the
semisimple part of an element $y$ of a semisimple group $G$ is
contained in the closure of the conjugacy class is well known, see
\cite[II, 3.7]{SS}; for $G$-orbits of the Lie algebra of $G$ the
proof of the corresponding part is almost the same, see, e.g.,
\cite[Prop.~2.11]{Ja}).

Since $\pi$ is a regular map constant on the orbit $O_x$, it is
constant on $\overline{O_x}$. Hence ${\sf
Y}:=\overline{\operatorname{Im}\,\pi}
=\overline{\operatorname{Im}\,\bar{\pi}}$. Further, functions from
$R$ separate closed orbits in $L$ (see, e.g., \cite[Chapter~1,
$\S~1.2$]{Po}). Hence $\bar{\pi}^{-1}(\pi (h)) = H \cap O_h$ where
$O_h$ is the orbit of $h$. Since $H \cap O_h = Wh$ \cite[3.16]{SS},
we are done.
\end{proof}

\begin{remark} If $\operatorname{char}(K)$ is not a torsion prime for
$G({\sf R}, K)$, then there is an isomorphism $\pi^\prime\colon
L({\sf R}, K)/G\toeq H/W$, and the quotient $H/W$ is isomorphic to
${\mathbb A}^r$ \cite{SS}, \cite[3.12]{Sl}. Hence in this case ${\sf
Q}\cong H/W\cong {\mathbb A}^r$. In general, the natural morphism
$H/W \to L({\sf R}, K)/G$, induced by the inclusion $H\to L({\sf R},
K)$, is dominant; it is an isomorphism if and only if ${\sf R} =
{\sf C}_r$, $r\ge 1$, $\operatorname{char}(K) = 2$ \cite{CR}.
\end{remark}

\subsection{Cartan subalgebras and regular elements} \label{2.2}

Let $\beta\in{\sf R}$. We have $($cf. \cite[Lemma 2.3.2]{CR}$)$

\begin{equation}
\beta \equiv 0 \Leftrightarrow  {\sf R = C}_r, r \geq 1,
\operatorname{char}(K) = 2, \beta\,\,\,\text{is a long
root}\label{eq2.1}
\end{equation}
$($here ${\sf C}_1 = {\sf A}_1$, ${\sf C}_2 = {\sf B}_2)$. Thus, if
we are not in the case ${\sf R = C}_r$, $r \geq 1$,
$\operatorname{char}(K) = 2$, the subalgebra $H$ is a {\it  Cartan}
subalgebra, that is, a nilpotent subalgebra coinciding with its
normalizer. In the case ${\sf R = C}_r$, $r \geq 1$,
$\operatorname{char}(K) = 2$, the subalgebra $H$ is a Cartan
subalgebra of $[L({\sf R}, K), L({\sf R}, K)]\cong L({\sf D}_r, K)$
$($here $L({\sf D}_1, K) = K, L({\sf D}_2, K) = \mathfrak{sl}(2,
K)\times \mathfrak{sl}(2, K))$.

\bigskip

{\bf e.} {\it Suppose we are not in the case ${\sf R = C}_r$, $r
\geq 1$, $\operatorname{char}(K) = 2$. Then if $|K| \geq |\sf R^+|$,
the subalgebra $H$ contains a regular element. Moreover, if $|K| > m
|\sf R|$ for some $m \in \mathbb N$, then for every subset $S\subset
K$ of size $m$ there exists $h \in H$ such that $\beta(h)\notin S$
for every $\beta \in \sf R$.}

\begin{proof}
For infinite fields the statement is trivial. If $K$ is a finite
field, then $|H| = |K|^r$, and the hyperplane $H_{x, \beta}$ of $H$
defined by the equation $[h, e_\beta] = x$, $x\in S$, consists of
$|K|^{r-1}$ points. Thus,
$$
|\bigcup_{x \in S, \beta \in {\sf R}}H_{x,\beta}| \leq |S| \cdot
|H_{x,\beta}| = m|{\sf R}| \cdot |K|^{r-1} < |H|,
$$
and therefore we can take $h \in H\setminus \cup_{x \in S, \beta \in
{\sf R}}H_{x,\beta}$. The first statement can be proved by the same
arguments for $S = \{0\}$ using the fact that  $0 \in H_{0, \beta} =
H_{0, -\beta}$ for every $\beta \in {\sf R}$.
\end{proof}

\subsection{Exceptional cases} \label{2.3}

The Chevalley algebra $L({\sf R}, K)$ modulo the centre is not
simple in the following cases (see, e.g., \cite{Ho}):
\begin{equation}
{\sf R}= {\sf A}_1, {\sf B}_r, {\sf C}_r, {\sf F}_4 {\text{\rm{ if
char}}}(K)=2, \,\,\,{\sf R} = {\sf G}_2 {\text{\rm{ if char}}}(K)=3.
\label{eq2.2}
\end{equation}
Namely:

1) Let  ${\sf R}= {\sf A}_1$  and $\operatorname{char}(K) = 2$. Then
$L({\sf A}_1, K) \cong \mathfrak{sl}(2, K) $ is a nilpotent algebra
satisfying the identity $[[X, Y], Z]\equiv 0$.

2)  Let  ${\sf R}= {\sf B}_2$  and $\operatorname{char}(K) = 2$.
Then $L({\sf B}_2, K) \cong \mathfrak{so}(5, K) $ is  a solvable
algebra satisfying the identity $[[X, Y], [Z, T]]\equiv 0$.

3) Let ${\sf R} = {\sf B}_r, r > 2$ and $\operatorname{char}(K) =
2$. Then $L({\sf B}_r, K)$ contains the nilpotent ideal $I$
generated by $\{e_\beta\mid \,\,\,\beta \,\,\,\text{is a short
root}\}$, and $L({\sf B}_r, K)/I \cong L({\sf D}_r, K)/Z^\prime $
where $Z^\prime \leq Z(L({\sf D}_r, K))$.

4) Let  ${\sf R} = {\sf F}_4$ and $\operatorname{char}(K) = 2$. Then
$L({\sf F}_4, K)$ contains the ideal $I$ generated by $\{e_\beta\mid
\,\,\,\beta \,\,\,\text{is a short root}\}$, and $L({\sf F}_4, K)/I
\cong L({\sf D}_4, K)/Z^\prime $ with $Z^\prime \leq Z(L({\sf D}_4,
K))$ where $Z(L({\sf D}_4, K))$ is the centre.

5)  Let  ${\sf R} = {\sf G}_2$ and $\operatorname{char}(K) = 3$.
Then $L({\sf G}_2, K)$ contains the ideal $I\cong \mathfrak{sl}(3,
K) $ generated by $\{e_\beta\mid \,\,\,\beta \,\,\,\text{is a short
root}\}$, and the algebra $L({\sf G}_2, K)/I$ is isomorphic to
$\mathfrak{sl}(3, K) /Z( \mathfrak{sl}(3, K) ) $.

6) Let ${\sf R} = {\sf C}_r, r > 2$ and $\operatorname{char}(K) =
2$. Then $L({\sf C}_r, K)$ contains the ideal $I\cong L({\sf D}_r,
K)$ generated by $\{e_\beta\mid \,\,\,\beta \,\,\,\text{is a short
root}\}$, and the algebra $L({\sf C}_r, K)/I $ is abelian.

\bigskip

The other Chevalley algebras $L({\sf R}, K)$ corresponding to
irreducible root systems $\sf R$ are simple modulo the centre $Z$
\cite{St1}. The simple algebras $\Fg =L({\sf R}, K)/Z$ are
classical. The classical semisimple Lie algebras and the
corresponding Chevalley algebras form a natural class to consider
polynomial maps $P(X_1, \dots, X_d)$ on its products. However, note
that the algebras appearing in ``bad cases'' 3)--5) are perfect,
i.e., satisfy the condition $[L({\sf R}, K), L({\sf R}, K) ] =
L({\sf R}, K)$, and therefore we can also raise the question on
dominancy of polynomial maps on such algebras.

\subsection{Prescribed Gauss decomposition} \label{2.5}

We will use the following generalization of the classical Gauss
decomposition.

{\bf f.} {\it Suppose we are not in the cases appearing in list
\eqref{eq2.2}. Fix an arbitrary non-central element $h \in H$. Then
for every non-central element $l \in L({\sf R}, K)$ there is $g \in
G$ such that $g(l) \in h + U$} \cite[Proposition~1]{Gorde} (actually
we mostly need below a particular case $h = 0$ treated in
\cite[Lemma II]{Br}).

\section{Dominancy of polynomial maps on Chevalley algebras}
\label{sec:dom}

In this section $K$ is an algebraically closed field.

\subsection{}

We are interested in the following analogue of the Borel dominancy
theorem for semisimple Lie algebras:

\begin{question} \label{quest:dom}
For a given element $P(X_1,\dots ,X_d)$ of the free Lie $K$-algebra
$\SL_d$ on the finite set $\{X_1,\dots ,X_d\}$ over a given
algebraically closed field $K$, and a given semisimple Lie algebra
$\Fg$ over $K$, is the map
$$P(X_1, \ldots, X_d)\colon  \Fg^d\rightarrow \Fg$$
dominant under the condition that $P(X_1, \ldots, X_d)$ is not an
identity on $\Fg$?
\end{question}

We do not know the answer to this question. However, we can get it
under some additional assumption. Our main result is

\begin{Theorem} \label{th:main}
Let $L({\sf R},K)$ be a Chevalley algebra. If
$\operatorname{char}(K) = 2$, assume that $\sf R$ does not contain
irreducible components of type ${\sf C}_r$, $r \geq 1$ $($here ${\sf
C}_1 = {\sf A}_1, {\sf C}_2 = {\sf B}_2)$.

Suppose $P(X_1,\dots ,X_d)$ is not an identity of the Lie algebra
$\mathfrak{sl}(2, K)$. Then the induced map $P\colon L({\sf
R},K)^d\to L({\sf R},K)$ is dominant.
\end{Theorem}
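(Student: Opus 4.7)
The plan is to first reduce, via the $G$-equivariance of $P$, to proving that the composed map $\pi\circ P\colon L^d\to \sf Y$ into the invariant-theoretic quotient is dominant, and then to establish this using the $\Fsl(2,K)$-subalgebras attached to roots.

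I would first assume $\sf R$ is irreducible, since $L({\sf R},K)=\bigoplus_i L({\sf R}_i,K)$ decomposes as a sum of ideals and $P$ respects this decomposition. Writing $L:=L({\sf R},K)$, the map $P\colon L^d\to L$ is equivariant under $G\le \Aut(L)$, so $\overline{P(L^d)}$ is an irreducible $G$-stable closed subvariety. Using the quotient morphism $\pi\colon L\to \sf Y$ of fact d, I claim $P$ is dominant if and only if $\pi\circ P$ is dominant. The nontrivial direction is: for $y$ in a dense open subset of $\sf Y$, the fiber $\pi^{-1}(y)$ is a single closed $G$-orbit $O_y=Wh$ of regular semisimple elements (facts b and d); any $G$-stable closed set meeting such a fiber must contain all of $O_y$, and by fact c the union of the $O_y$'s is dense in $L$. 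Thus dominance of $\pi\circ P$ forces $\overline{P(L^d)}=L$.

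To establish dominance of $\pi\circ P$, I would use the subalgebras $\Fg_\alpha=Kh_\alpha\oplus Ke_\alpha\oplus Ke_{-\alpha}\cong \Fsl(2,K)$ indexed by $\alpha\in\sf R$; the hypothesis on $\sf R$ precisely rules out the cases in which these fail to be copies of $\Fsl(2,K)$. Since $P$ is not an identity on $\Fsl(2,K)$, for each $\alpha$ the restriction $P|_{\Fg_\alpha^d}$ has a nonzero value in $\Fg_\alpha$. Replacing $P$ by one of its multi-homogeneous components (permissible because $K$ is infinite) makes $\overline{P(\Fg_\alpha^d)}$ a nontrivial cone in $\Fg_\alpha$.

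The main obstacle is bridging these one-dimensional ``$\Fsl(2)$-level'' statements to dominance onto the $r$-dimensional quotient $\sf Y$: the $\pi$-image of any single $\Fg_\alpha$ is only a curve, so no single subalgebra restriction suffices. I would handle this by considering ``mixed'' tuples $A=(X_1,\dots,X_d)\in L^d$ combining contributions from several $\Fg_\alpha$'s with a regular semisimple element $h\in H$ (available by fact e), and computing the differential $d(\pi\circ P)_A$ at a judicious $A$. It would suffice to exhibit a single $A$ with $P(A)\ne 0$ at which this differential has rank $r=\dim \sf Y$: the nonvanishing of $P$ on $\Fsl(2,K)$ furnishes nontrivial contributions one root at a time, while the spanning of $H$ by the $h_\alpha$'s together with the Weyl-group action assembles these into full rank. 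Fact f (the prescribed Gauss decomposition) provides a convenient affine slice $h+U$ on which the dominance question can be localized. The delicate technical step is arranging these one-root contributions to be transverse in the Weyl-invariant quotient, and this is where the detailed root-system structure and the specific shape of $P$ must be exploited.
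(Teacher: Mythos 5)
Your reduction to the irreducible case and the equivalence ``$P$ dominant $\Leftrightarrow$ $\pi\circ P$ dominant'' are exactly the paper's starting point (its equivalence \eqref{eq3.1}, proved from facts {\bf b}, {\bf c}, {\bf d} just as you do). But the heart of the theorem --- actually showing that $\overline{\pi(P(L^d))}$ has dimension $r$ --- is where your plan has a genuine gap. Your idea is to assemble full rank in $\sf Y$ out of the root subalgebras $\Fg_\alpha\cong\Fsl(2,K)$, one root at a time, by choosing a ``mixed'' tuple and computing a differential. This meets a concrete obstruction already for ${\sf R}={\sf A}_r$: the subalgebras $\Fg_\alpha$ for distinct roots commute only when the roots are strongly orthogonal, and ${\sf A}_r$ contains at most $\lfloor (r+1)/2\rfloor$ pairwise strongly orthogonal roots, so there is no full-rank subsystem of type ${\sf A}_1\times\cdots\times{\sf A}_1$ to work with. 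For non-commuting $\Fg_\alpha$'s the value of $P$ on a mixed tuple is not the sum of its restrictions to the individual $\Fg_\alpha^d$, so the single-root information (``$P$ is not an identity on $\Fsl(2,K)$'') does not propagate to a rank computation; moreover a nonzero value of $P$ on $\Fg_\alpha^d$ may be nilpotent, in which case its $\pi$-image coincides with $\pi(0)$ and contributes nothing in $\sf Y$. The differential criterion is also unreliable here: in characteristic $p$ a dominant map can have everywhere degenerate differential, and nothing in the hypothesis guarantees a point where $d(\pi\circ P)$ is surjective, nor do you construct one. Finally, replacing $P$ by a multihomogeneous component changes the map, and dominance of a component does not imply dominance of $P$.

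The paper closes this gap by an entirely different mechanism. For ${\sf R}={\sf A}_r$ it inducts on the rank along the chain $\Fsl(2,K)\subset\Fsl(3,K)\subset\cdots$, using Lemma \ref{lem3.1}: once $\dim\overline{\pi(P(L_{r-1}^d))}=r-1$ is known, it suffices to exhibit a single value of $P$ whose $\pi$-image lies outside $\overline{\bar\pi(H_{r-1})}$. Such a value is produced by the Deligne--Sullivan/Borel division-algebra trick: after enlarging the transcendence degree of $K$ one finds a division algebra $D_{r+1}$ dense in $M_{r+1}(K)$, whence a nonzero value $s=P(s_1,\dots,s_d)\in D_{r+1}\cap\mathfrak{sl}(r+1,K)$; since a division algebra has no nonzero nilpotents, $s$ is semisimple with no zero eigenvalue, hence not conjugate into $H_{r-1}$. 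The general case then follows because every irreducible root system contains a full-rank subsystem that is a disjoint union of systems of type $\sf A$. You would need to import this device (or an equivalent substitute for producing a regular semisimple value) to complete your argument.
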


Below we repeatedly use the following construction. Put
$$\mathfrak{I} = \overline{P(L({\sf R}, K)^d}).$$ Then
$\mathfrak{I}$ is an irreducible affine variety which is
$G$-invariant, and therefore $\overline{\pi
(\mathfrak{I})}\hookrightarrow \sf Y$ is an irreducible closed
subset of an $r$-dimensional affine variety $\sf Y$ (see
\ref{2.1}.{\bf d}). If $\overline{\pi (\mathfrak{I})} = \sf Y$,
then, by \ref{2.1}.{\bf d}, the set $\mathfrak{I}$ contains all
elements which are $G$-conjugate to elements of $H$ (because $\pi(x)
= \pi(x_s)$, $O_{x_s}\subset \overline{O_x}$ and $\mathfrak{I}$ is
$G$-invariant). This implies, by \ref{2.1}.{\bf c}, that
$\mathfrak{I} = L({\sf R}, K)$. Thus,
\begin{equation}
P\,\,\,\,\text{is dominant}\Leftrightarrow \overline{\pi
(\mathfrak{I})} = \sf Y. \label{eq3.1}
\end{equation}

\begin{lemma} \label{lem3.1}
Let $M \subset L({\sf R}, K)$ be an irreducible closed subset such
that
\begin{itemize}
\item[{\rm{(i)}}] $\dim {\overline{\pi (P(M^d))}} = r-1$;
\item[{\rm{(ii)}}] $\overline{\pi(P(M^d))} \ne \overline{\pi (\mathfrak{I})}$.
\end{itemize}
Then $P$ is dominant.
\end{lemma}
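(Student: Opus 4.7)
The plan is to apply the equivalence \eqref{eq3.1}, so the task reduces to showing $\overline{\pi(\mathfrak{I})}={\sf Y}$. Since $L({\sf R},K)$ is irreducible and ${\sf Y}=\overline{\operatorname{Im}\,\pi}$ (the closure of the image of an irreducible variety under a regular map), the variety $\sf Y$ is itself irreducible of dimension $r$; hence it suffices to establish $\dim\overline{\pi(\mathfrak{I})}=r$, since any $r$-dimensional irreducible closed subset of $\sf Y$ must coincide with $\sf Y$.

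First I would record the automatic inclusion: since $M^d\subseteq L({\sf R},K)^d$, we have $P(M^d)\subseteq \mathfrak{I}$ and hence $\overline{\pi(P(M^d))}\subseteq \overline{\pi(\mathfrak{I})}$. Next, because $M$ is irreducible, so is $M^d$, and therefore $P(M^d)$ and its closure $\overline{\pi(P(M^d))}$ are irreducible closed subsets, being closures of images of an irreducible set under a regular map. By hypothesis (ii) this inclusion is strict, and since $\overline{\pi(\mathfrak{I})}$ is itself irreducible, a proper irreducible closed subset of it must have strictly smaller dimension:
$$\dim\overline{\pi(P(M^d))}<\dim\overline{\pi(\mathfrak{I})}.$$

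Combining with hypothesis (i) this gives $\dim\overline{\pi(\mathfrak{I})}>r-1$, hence $\dim\overline{\pi(\mathfrak{I})}\ge r$. But $\overline{\pi(\mathfrak{I})}\subseteq{\sf Y}$ and $\dim{\sf Y}=r$, so $\dim\overline{\pi(\mathfrak{I})}=r$; irreducibility of $\sf Y$ then forces $\overline{\pi(\mathfrak{I})}={\sf Y}$, and \eqref{eq3.1} yields the dominancy of $P$.

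There is no real obstacle in this proof: it is a purely formal dimension count resting on irreducibility of $\sf Y$ and of $\overline{\pi(\mathfrak{I})}$, together with the properties of $\pi$ established in \ref{2.1}.\textbf{d}. The genuine difficulty lies not in proving the lemma but in \emph{using} it: in subsequent applications one will have to exhibit, for the particular polynomials $P$ under study, an auxiliary irreducible closed subset $M\subset L({\sf R},K)$ satisfying both conditions (i) and (ii), which is where all the combinatorial and representation-theoretic content will be concentrated.
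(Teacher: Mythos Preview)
Your proof is correct and follows essentially the same route as the paper's: both arguments use the inclusion $\overline{\pi(P(M^d))}\subseteq\overline{\pi(\mathfrak{I})}$, the irreducibility of each side, and the dimension hypothesis (i) together with (ii) to force $\dim\overline{\pi(\mathfrak{I})}>r-1$, whence $\overline{\pi(\mathfrak{I})}={\sf Y}$ and \eqref{eq3.1} applies. The paper compresses this into two sentences (noting that $\overline{\pi(P(M^d))}$ is an irreducible hypersurface in ${\sf Y}$ and then invoking (ii) and \eqref{eq3.1}), while you spell out the dimension comparison explicitly; the content is the same.
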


\begin{proof}
Since $M$ is irreducible and $\dim {\overline{\pi (P(M^d))}} = r-1$,
we conclude that $\overline{\pi(P(M^d))}$ is an irreducible
hypersurface in $\sf Y$. The assertion of the lemma now follows from
(ii) and \eqref{eq3.1}.
\end{proof}

We can now start the proof of \thmref{th:main}.

First of all, the statement is obviously reduced to the case where
$\sf R$ is irreducible. Indeed, if $\sf R$ is a disjoint union of
${\sf R}_i$, then $\Fg =L({\sf R}, K)$ is a direct sum of
$\Fg_i=L({\sf R}_i, K)$, and the image $\operatorname{Im}\,P$ of the
map $P= P(X_1, \dots, X_d)\colon \Fg ^d \rightarrow \Fg$ is equal to
$\oplus_{i}{\operatorname{Im} P_i}$ where $P_i$ is the restriction
of $P$ to $\Fg_i$.

Note that
$$P(X_1, \dots, X_d) = \sum_i a_i X_i + \sum (\text{monomials from }\,\,\,\SL_d\,\,\,\text{of degree} > 1)$$
where $a_i \in K$. If $a_i \ne 0$ for some $i$, the statement is
trivial. Thus we may and will assume $a_i = 0$ for every $i$.

First we prove the assertion of the theorem for the case ${\sf R =
A}_r$ by induction on $r$.

By \eqref{eq3.1}, it is enough to prove
$\overline{\pi(\mathfrak{I})} = \sf Y$, i.e., $\dim
\overline{\pi(\mathfrak I)}=r$. Note that in the case
$\operatorname{char}(K) = 2$ the statement of the theorem fails for
$r =1$. However, as we will see below, the equality
$\overline{\pi(\mathfrak{I})} = \sf Y$ holds even in this case. Thus
we can prove that $\overline{\pi(\mathfrak{I})} = \sf Y$ by
induction on the rank $r$ starting at $r = 1$.

\bigskip

We identify $L({\sf A_r}, K)$ with $\mathfrak{sl}(r+1,K)$, the
algebra of $(r+1)\times (r+1)$-matrices with zero trace. We fix the
chain of subalgebras $L_1\subset L_2 \subset \cdots \subset L_r =
\mathfrak{sl}(r+1,K)$ where $L_{i-1} = \mathfrak{sl}(i, K)$ is the
subalgebra embedded in the $i\times i$ upper left corner of the
matrix algebra $L_i = \mathfrak{sl}(i+1, K)$. We also fix, for each
$i$, the subalgebra $H_i\subset L_i$ of diagonal matrices in $L_i$.
Further, let $P_i = P_{\mid L_i^d}$.

\bigskip

Induction base: we prove that $\dim \overline{\pi
(\operatorname{Im}\, P_1)} = 1$. Note that according to our
assumption $P_1\ne 0$.

Let first $\operatorname{char}(K) = 2.$ Then according to our
assumption and case 1) of Section \ref{2.3}, we have
$$P_1 =  \sum_{i,j} a_{ij}[X_i, X_j]$$
where $a_{ij} \in K$. Let $a_{i_0,j_0}\ne 0$. Set
$P^\prime_1(X_1,\dots ,X_d)=a_{i_0,j_0} [X_{i_0},X_{j_0}]$. On
putting $X_i = 0$ for all $i \ne i_0, j_0$, we see that
$\operatorname{Im}\,P_1 \supseteq
\operatorname{Im}\,P^\prime_1=H_1$, and therefore $\dim
\overline{\pi (\operatorname{Im}\, P_1)} = 1$ (see \ref{2.1}.{\bf
d}). Note that this case requires special consideration only because
we cannot refer to \lemref{lem3.1}, as below.

Let us now assume  $\operatorname{char}(K) \ne 2.$

As the map $P$ is not identically zero, we may apply \lemref{lem3.1}
with $M = 0$. We obtain the dominancy of $P_1\colon L_1^d\rightarrow
L_1$ which implies $\dim \overline{\pi(\operatorname{Im}\, P_1)} =
1$.

\medskip

Inductive step: assume
\begin{equation}
\dim\overline{\pi(\operatorname{Im}\, P_{r-1})} = r-1\label{eq3.2}
\end{equation}
and prove
\begin{equation}
\dim\overline{\pi(\operatorname{Im}\, P_{r})} = r.\label{eq3.2a}
\end{equation}
We have
$$
H_{r-1} = \{x = \operatorname{diag}(\alpha_1, \dots, \alpha_r, 0)\in
M_{r+1}(K)\mid\,\,
 \operatorname{tr}  x = 0\}.
$$
(Here $M_{r+1}(K)$ is the algebra of $(r+1)\times (r+1)$-matrices
over $K$.) According to \ref{2.1}.{\bf d}, we have
\begin{equation}
\overline{\pi (\operatorname{Im}\, P_{r-1})} =\overline{
\bar{\pi}(H_{r-1})}. \label{eq3.3}
\end{equation}
Suppose that
\begin{equation}
\mathfrak{I}\cap H_r \ne WH_{r-1}.\label{eq3.4}
\end{equation}
Then
\begin{equation}
\overline{\pi( \mathfrak{I})} \supseteq \bar{\pi} (\mathfrak{I}\cap
H_r) \ne \overline{\bar{\pi}(H_{r-1})}. \label{eq3.5}
\end{equation}
Condition \eqref{eq3.2} is condition 1) from Lemma \ref{lem3.1} with
$M = L_{r-1}$. Conditions \eqref{eq3.3} and \eqref{eq3.5} give us
condition 2) from the same lemma. Note that the dominancy of $P$
implies $\dim \overline{\pi (\mathfrak{I}}) = r$. Hence we have to
prove that condition \eqref{eq3.4} holds.

We may assume that the transcendence degree of $K$ is sufficiently
large because this does not have any influence on dominancy of $P$.
Then we may also assume that there exist a subfield $F\subset K$ and
a division algebra $D_{r+1}\subset M_{r+1}(K)$ with centre $F$ such
that $D_{r+1}\otimes_{F} K = M_{r+1}(K)$ \cite{DS}, \cite{Bo2}. The
algebra $D_{r+1}$ is dense in $M_{r+1}(K)$. Hence the set $[D_{r+1},
D_{r+1}]$ is dense in $[M_{r+1}(K), M_{r+1}(K)] = \mathfrak{sl}(r+1,
K)$. On the other hand, $[D_{r+1}, D_{r+1}]\subset D_{r+1}$. Thus
the set $S_{r+1} = D_{r+1}\cap \mathfrak{sl}(r+1, K)$ is dense in $
\mathfrak{sl}(r+1, K)$, and therefore the restriction of $P$ to
$S_{r+1}^d$ is not the zero map. Then there exist $s_1, \dots, s_d
\in S_{r+1}$ such that $s = P(s_1, \dots, s_d)\ne 0$. Since  $s_1,
\dots, s_d\in D_{r+1}$, we have $s \in D_{r+1}$. As there are no
nonzero nilpotent elements in division algebras, all elements of
$D_{r+1}$ are semisimple, so we may assume $s \in H_r$. Since $s$
has no zero eigenvalues, $s \notin WH_{r-1}$, and we get
\eqref{eq3.4}. Thus \eqref{eq3.2a} is proven, and the assertion of
the theorem for ${\sf R}={\sf A}_r$ is established.

\bigskip

The general case is a consequence of the following observation
\cite{Bo2}: every irreducible root system $\sf R$ has a subsystem
${\sf R}^\prime$ which has the same rank as $\sf R$ and decomposes
into a disjoint union of irreducible subsystems ${\sf R}^\prime =
\bigcup_{i}{\sf R}^\prime_i$ where each ${\sf R}_i^\prime$ is a
system of type ${\sf A}_{r_i}$.  Hence
$$L^\prime =\bigoplus_{i} L({\sf A}_{r_i}, K) \subset L({\sf R}, K),\,\,\,\sum_i r_i = r.$$
Thus $$\dim \overline{\pi (P(L^\prime))} = r\Rightarrow\overline{\pi
(\mathfrak{I})} = \sf Y,$$ and we get the statement from
\eqref{eq3.1}.

\thmref{th:main} is proved. \qed

\begin{cor} \label{cor:main}
Let $\Fg$ be a classical semisimple Lie algebra. Suppose that a
polynomial $P(X_1,\dots ,X_d)$ is not an identity of the Lie algebra
$\mathfrak{sl}(2, K)$. Then the induced map $P\colon \Fg^d\to \Fg$
is dominant.
\end{cor}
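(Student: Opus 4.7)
The plan is to deduce \corref{cor:main} from \thmref{th:main} by passing to the quotient by the centre. The first step is a routine reduction to the simple case: if $\Fg = \Fg_1 \oplus \cdots \oplus \Fg_s$ is the decomposition into simple ideals, then $P\colon \Fg^d \to \Fg$ decomposes componentwise, and dominancy on each factor implies dominancy on the direct sum, exactly as in the analogous reduction performed at the start of the proof of \thmref{th:main}.

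Assuming now that $\Fg$ is simple classical, I would invoke the description recalled in \ref{2.3}: such a $\Fg$ has the form $L({\sf R}, K)/Z$ for an irreducible root system $\sf R$ that does not appear in the exceptional list \eqref{eq2.2}. In particular, the components of type ${\sf C}_r$ in characteristic $2$ (which include ${\sf A}_1 = {\sf C}_1$ and ${\sf B}_2 = {\sf C}_2$) are excluded, so the lift $L = L({\sf R}, K)$ meets the hypothesis of \thmref{th:main}. Consequently, the Chevalley-level map $P_L\colon L^d \to L$ is dominant.

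The final step is to push this dominancy down along the canonical projection $q\colon L \to \Fg$. Since $q$ is a surjective Lie algebra homomorphism and $P \in \SL_d$ is built from iterated brackets, the induced polynomial maps fit into the commutative square
$$ q \circ P_L \;=\; P_\Fg \circ q^{\times d}, $$
where $q^{\times d}\colon L^d \to \Fg^d$ is the $d$-fold product. Since $q$ is a surjective $K$-linear map, it is a continuous surjection of affine varieties, and similarly for $q^{\times d}$. Applying $q$ to the equality $\overline{P_L(L^d)} = L$ and using continuity of $q$ yields
$$ \Fg \;=\; q(L) \;=\; q\bigl(\overline{P_L(L^d)}\bigr) \;\subseteq\; \overline{q(P_L(L^d))} \;=\; \overline{P_\Fg(q^{\times d}(L^d))} \;=\; \overline{P_\Fg(\Fg^d)}, $$
so $P_\Fg$ is dominant, as required.

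The only point that requires genuine verification --- and the closest thing here to a ``main obstacle'' --- is checking that the exceptional list \eqref{eq2.2} defining ``classical'' is contained in (in fact, strictly larger than) the exclusion list in the hypothesis of \thmref{th:main}, so that every classical simple $\Fg$ actually admits a Chevalley lift $L$ to which the theorem applies. Once this containment of exclusion lists is confirmed, the rest of the argument is purely formal.
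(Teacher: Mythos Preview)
Your argument is correct and follows essentially the same route as the paper: lift to the Chevalley algebra $L({\sf R},K)$, apply \thmref{th:main}, and descend along the quotient by the centre. The paper phrases the descent step as the observation that (after discarding linear terms, which is harmless for dominancy) ``$P$ is trivial on $Z$'', whereas you use the cleaner and more general fact that any Lie polynomial map commutes with any Lie algebra homomorphism --- so your version actually avoids the need to first strip off the degree-$1$ part.
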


\begin{proof}
Let $\sf R$ be the root system corresponding to $\Fg$. If the
Chevalley algebra $L({\sf R}, K)$ is semisimple, we have $\Fg=L({\sf
R}, K)$, and there is nothing to prove. If $\Fg =L({\sf R}, K)/Z$,
where $Z$ is the centre, the assertion is an immediate consequence
of the following obvious observation: if the Lie polynomial
$P(X_1,\dots ,X_d)$ does not contain terms of degree 1, then the map
$P\colon L({\sf R}, K)^d\to L({\sf R}, K)$ is trivial on $Z$.
\end{proof}

\subsection{}
\thmref{th:main} reduces the problem of dominancy to the class of
maps $P$ which are identically zero on $\mathfrak{sl}(2, K)$. The
following theorem gives another possibility to reduce the problem of
dominancy.

\begin{Theorem} \label{th:induction}
Let $L({\sf R},K)$ be a Chevalley algebra corresponding to an
irreducible root system $\sf R$, and suppose that ${\sf R}\ne {\sf
C}_r$ if $\operatorname{char}(K)  = 2$.

Suppose that the map $P\colon L({\sf R},K)^d\to L({\sf R},K)$ is
dominant for ${\sf R = A}_2$ and ${\sf B}_2$. Then $P$ is dominant
for every $L({\sf R}, K)$, $r > 1.$
\end{Theorem}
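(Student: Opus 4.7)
I would approach \thmref{th:induction} by induction on the rank $r=|\Pi|$, leaning heavily on the apparatus (the map $\pi$, \lemref{lem3.1}, and the equivalence \eqref{eq3.1}) already set up in the proof of \thmref{th:main}. The base case $r=2$ splits into three irreducible types: ${\sf A}_2$ and ${\sf B}_2$ are delivered by hypothesis, and only ${\sf G}_2$ requires an argument. For ${\sf G}_2$ I would use the classical fact that the long roots form an ${\sf A}_2$ subsystem whose Chevalley subalgebra $M\cong L({\sf A}_2,K)\subset L({\sf G}_2,K)$ shares the Cartan subalgebra $H$ with the ambient algebra. Since $P$ is dominant on $M$ by hypothesis, $\overline{P(M^d)}=M\supseteq H$; applying the $G$-equivariant morphism $\pi$ from \ref{2.1}.\textbf{d} gives $\overline{\pi(P(M^d))}\supseteq\overline{\pi(H)}={\sf Y}$, and \eqref{eq3.1} produces dominance on $L({\sf G}_2,K)$.

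In the inductive step with $r>2$ the treatment splits by the type of $\sf R$. For ${\sf R}={\sf A}_r$ I would reproduce the ${\sf A}_r$-induction from the proof of \thmref{th:main}: taking $M=L({\sf A}_{r-1},K)=\Fsl(r,K)$ embedded as the upper-left block in $\Fsl(r+1,K)=L({\sf A}_r,K)$, the two conditions of \lemref{lem3.1} are checked exactly as before. Condition (i) is immediate from the inductive dominance of $P$ on $L({\sf A}_{r-1},K)$. Condition (ii) follows from the division-algebra construction: after enlarging the transcendence degree of $K$ if necessary, one picks a central division subalgebra $D_{r+1}\subset M_{r+1}(K)$ dense in $M_{r+1}(K)$, evaluates $P$ on $D_{r+1}^d$ to obtain a nonzero semisimple element of $\Fsl(r+1,K)$ with no zero eigenvalue, and concludes that it lies outside $WH_{r-1}$.

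For other irreducible types, I would invoke the Borel subsystem observation from the proof of \thmref{th:main}: $\sf R$ admits a full-rank subsystem ${\sf R}'=\bigcup_i{\sf A}_{r_i}$ with $\sum_ir_i=r$. By the preceding ${\sf A}_r$-case, $P$ is dominant on $L({\sf A}_{r_i},K)$ whenever $r_i\ge 2$, hence on the direct sum $\bigoplus_iL({\sf A}_{r_i},K)\subset L({\sf R},K)$. Since $\sf R'$ has the same rank as $\sf R$, the Cartan of this subalgebra coincides with $H$, so $\overline{\pi(P(L({\sf R}',K)^d))}={\sf Y}$, and \eqref{eq3.1} closes the argument just as in \thmref{th:main}.

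The main obstacle, I expect, is handling those irreducible types for which Borel's ${\sf R}'$ is forced to contain ${\sf A}_1$ components (for instance ${\sf D}_r$ with $r$ odd, where the only proper full-rank closed ${\sf A}$-subsystem is of the shape ${\sf A}_3\cup{\sf A}_1\cup\cdots$). Dominance on $L({\sf A}_1,K)=\Fsl(2,K)$ cannot be extracted from our hypotheses, since $P$ can be an identity of $\Fsl(2,K)$ while still being dominant on $L({\sf A}_2,K)$. This is precisely where the separate ${\sf B}_2$ hypothesis becomes indispensable: for the problematic types one either refines the decomposition of $\sf R$ to replace stray ${\sf A}_1$-factors by a ${\sf B}_2$-factor (available when $\sf R$ has two root lengths, e.g., inside ${\sf F}_4$ or ${\sf B}_r$), or one applies \lemref{lem3.1} with $M$ a rank-$(r-1)$ subalgebra of the same classical type as $\sf R$ and verifies condition (ii) by a construction analogous to the division-algebra trick but adapted to the appropriate matrix realization. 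The combinatorial case analysis of subsystems across all irreducible types is the technical heart of the proof.
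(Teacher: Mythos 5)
Your overall architecture coincides with the paper's: induction on the rank, the base case $r=2$ handled exactly as you describe (${\sf G}_2$ via its full-rank long-root subsystem of type ${\sf A}_2$), the ${\sf A}_r$ step repeated verbatim from the proof of \thmref{th:main}, and full-rank type-${\sf A}$ subsystems for ${\sf E}_r$. You also correctly isolate the real difficulty: for some types the Borel full-rank subsystem necessarily contains ${\sf A}_1$ factors, on which dominancy is not available here, and you correctly guess that the remedy is \lemref{lem3.1} applied with $M$ a corank-one subalgebra of the same classical type. This is exactly what the paper does for ${\sf C}_r$ and ${\sf D}_r$ (with ${\sf C}_2={\sf B}_2$ serving as the bottom of the ${\sf C}$-chain, and the full-rank inclusions ${\sf D}_r\subset{\sf B}_r$, ${\sf D}_4\subset{\sf F}_4$ disposing of the remaining types).

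The gap is in how you propose to verify condition (ii) of \lemref{lem3.1} for ${\sf R}={\sf C}_r,{\sf D}_r$ with $M=L({\sf R}_2,K)$, ${\sf R}_2={\sf C}_{r-1}$ or ${\sf D}_{r-1}$. You defer this to ``a construction analogous to the division-algebra trick but adapted to the appropriate matrix realization,'' but that trick has no straightforward analogue here: its force in the ${\sf A}_r$ case is that a division algebra $D_{r+1}$ with $D_{r+1}\otimes_F K=M_{r+1}(K)$ is dense in $M_{r+1}(K)$ and consists entirely of semisimple elements without zero eigenvalues, and you produce no dense ``division-algebra form'' of $\mathfrak{so}$ or $\mathfrak{sp}$ inside the ambient matrix algebra, nor a replacement for the eigenvalue argument. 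The paper sidesteps this with a short weight computation: taking ${\sf R}_1=\langle\alpha_1,\dots,\alpha_{r-1}\rangle={\sf A}_{r-1}$ and ${\sf R}_2=\langle\alpha_2,\dots,\alpha_r\rangle$, every $h'\in WH_2$ kills some fundamental weight $\epsilon_i$ (since $\epsilon_1(H_2)=0$ and $W$ permutes the $\epsilon_i$ up to sign), whereas one can choose $h\in H_1$ with $\epsilon_i(h)\ne0$ for all $i$; since $P$ is dominant on $L({\sf A}_{r-1},K)$ by the already-established ${\sf A}$-case, $h\in\mathfrak{I}$ while $\pi(h)\notin\overline{\pi(P(M^d))}$, which is exactly condition (ii). Without this step (or a worked-out substitute), your inductive step for ${\sf C}_r$ and ${\sf D}_r$ does not close.
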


\begin{proof}
We prove the theorem by induction on $r$. Let first $r=2$. The cases
${\sf R = A}_2, {\sf B}_2$ are included in the hypothesis, and the
case ${\sf R = G}_2$ is established by the same argument as at the
end of the proof of \thmref{th:main} because ${\sf G}_2$ contains
${\sf A}_2$.

Let now $r>2$, and  make the induction hypothesis:

\noindent {\it  the map $P$ is dominant for every $L({\sf R}, K)$
where $1 <\operatorname{rank}\,{\sf R} < r$}.

We proceed case by case.

\medskip

${\sf R = A}_r$. The induction step is the same as in the proof of
\thmref{th:main}.

\medskip

${\sf R = C}_r$ $(r\ge 3)$ or ${\sf D}_r$ $(r\ge 4)$. 
Let $\Pi = \{\alpha_1 , \dots, \alpha_r \}$ be the simple root
system numerated as in Bourbaki \cite{Bou}. Let $\Pi_1 =\{ \alpha_1,
\dots, \alpha_{r-1}\}$, $\Pi_2 = \{\alpha_2, \dots, \alpha_{r}\}$.
Then ${\sf R}_1 = \left< \Pi_1\right> = {\sf A}_{r-1}$, ${\sf R}_2 =
\left< \Pi_2\right> = {\sf C}_{r-1}$ or ${\sf D}_{r-1}$,
respectively. Let $H_i = H\cap L({\sf R}_i, K)$. There exists $h \in
H_1$ such that $h\notin WH_2$.

Indeed, let $\epsilon_i\colon H\rightarrow K$ be the weights given
by the formula $\epsilon_i (h_{\alpha_k}) = \frac{2 (\epsilon_i,
\alpha_k)}{(\alpha_k, \alpha_k)}$. Then $\epsilon_1 (H_2) = 0$, and
therefore for every $h^\prime \in WH_2$ we have $\epsilon_i
(h^\prime) = 0$ for some $i$. 
On the other hand, since ${\sf R}_1 = {\sf A}_{r-1}$, we can find $h
\in H_1$ such that $\epsilon_i(h) \ne 0$ for every $i$, and
therefore $h\notin WH_2$.

Note that $h \in \overline{P(L({\sf R_1}, K)^d)}$ because $P$ is
dominant on $L({\sf R_1}, K)$ (see the proof of \thmref{th:main}).
Then $h \in \mathfrak{I}$. On the other hand, $\pi(h) \notin
\overline{\pi (P(L({\sf R}_2, K)^d))}$ because $h \notin WH_2$.
Hence we can apply \lemref{lem3.1} with $M = L({\sf R}_2, K)$.

\medskip

${\sf R = B}_r, {\sf F}_4$. Here we have ${\sf D}_r \subset {\sf
R}$. (The respective embeddings are as follows: $\Fso (2r)\subset
\Fso (2r+1)$ is a natural inclusion, and ${\sf D}_4$ embeds into
${\sf F_4}$ as the subsystem consisting of the long roots.)

Then $H\subset \overline{P(L({\sf D}_r, K))}$, and therefore $P$ is
dominant on $L({\sf R}, K)$.

\medskip

${\sf R = E}_r$. Consider the extended Dynkin diagram. We obtain a
needed subsystem of type $\sf A$ by removing one of its vertices. In
each case the diagram is a trident. We remove the $3$-valent vertex
in the case $r=6$, and the tooth of length 1 (the lower vertex
$\alpha_2$ in the Bourbaki notation) in the cases $r=7, 8$. We
obtain subsystems of types ${\sf A}_2\times {\sf A}_2 \times {\sf
A}_2$, ${\sf A}_7$, and ${\sf A}_8$, respectively. Then we use the
same argument as above.
\end{proof}

\subsection{} \label{example}

To use the theorems proven above for practical purposes, the
following simple remarks may be useful.

If $P(X_1, \ldots, X_d) \in \SL_d$ is a polynomial containing a
monomial of degree $< 5$, then the map $P\colon L({\sf R},
K)^d\rightarrow L({\sf R}, K)$ $(\operatorname{char}(K)\ne 2)$ is
dominant.

The reason is that such a polynomial cannot be an identity in $\Fsl
(2,K)$. Indeed, if it were an identity, so would be its homogeneous
component of the lowest degree (because any homogeneous component of
any polynomial identity of any algebra of any signature over any
infinite field is an identity, see \cite[6.4.14]{Ro}). On the other
hand, any identity of the Lie algebra $\Fsl (2,K)$
$(\operatorname{char}(K)\ne 2)$, is an identity of $\Fgl (2,K)$
(because every matrix is a sum of a trace zero matrix and a scalar
matrix, and such an identity lifts to an identity of the associative
matrix algebra $M_2(K)$). The latter one does not contain identities
of degree less than $4$ (which is the smallest degree of the
so-called standard identity satisfied in $M_2$), hence the same is
true for $\Fgl (2)$ (see, e.g., \cite[Remark 6.1.18]{Ro} or
\cite[Exercise 2.8.1]{Ba}). Moreover, a little subtler argument
allows one to show that $\Fsl (2,K)$ does not contain identities of
degree 4 (see, e.g., \cite[Section 5.6.2]{Ba}).

Note that Razmyslov \cite{Ra} found a finite basis for identities in
this algebra (assuming $K$ to be of characteristic zero). Moreover,
it turned out that all such identities are consequence of the single
identity \cite{Fi}:
$$
P = [[[Y,Z],[T,X]],X] + [[[Y,X],[Z,X]],T],
$$
and this result remains true for any infinite field $K$,
$\operatorname{char}(K)\ne 2$ \cite{Va}.

Below we illustrate how one can apply \thmref{th:induction} using
one of the identities appearing in Razmyslov's basis (the reader
willing to deduce this identity from Filippov's one mentioned above
is referred to Section 2 of \cite{Fi}).

\begin{Example} \label{ex:Raz}
The polynomial $[[[[Z,Y],Y],X],Y] - [[[[Z,Y],X],Y],Y]$ appears in
\cite{Ra} as one of the elements of a finite basis of identities in
$\Fsl (2,K)$ ($\operatorname{char}(K) = 0$). Clearly, the polynomial
$$P(X,Y,Z)=[[[[[Z,Y],Y],X],Y],[[[[Z,Y],X],Y],Y]]$$
is also identically zero in $\Fsl (2,K)$. We check dominancy of the
map
$$P\colon L({\sf R}, K)^3\rightarrow L({\sf R}, K)$$
using computations by MAGMA. In view of \thmref{th:induction}, we
have to check dominancy only for ${\sf R = A}_2, {\sf B}_2$.

\bigskip

Consider the map $\pi \colon L({\sf R}, K)\rightarrow \sf Y$ defined
in Section \ref{2.1}.{\bf{d}}. Since $\operatorname{char}(K) = 0$,
we have ${\sf Y}\cong H/W\cong {\mathbb A}^r$, and $\pi = (f_1, f_2,
\dots, f_r)$ where $f_1, f_2, \dots, f_r$ are $G$-invariant
homogeneous polynomials on $L({\sf R}, K)$ which generate the
invariant algebra $K[L({\sf R}, K)]^G\cong K[H]^W$. Moreover, $\deg
f_1\deg f_2\cdots \deg f_r = |W|$ (see Section \ref{2.1}.{\bf{d}}).
In our cases, $r = 2$ and we have $\deg f_1 = 2$, $\deg f_2 = 3$ for
${\sf R = A}_2$ and $\deg f_1 =2$, $\deg f_2 = 4$ for ${\sf R =
B}_2$.

Let $$0\ne D_1 = P(A, B,C), D_2 = P(A^\prime, B^\prime, C^\prime)
\in \mathfrak{I} = \overline{\operatorname{Im} P(L({\sf R},
K))^3}.$$ Since $P$ is a homogeneous map with respect to $X, Y,Z$,
the lines $l_j := KD_j$, $j=1, 2$, also lie in $\mathfrak{I}$, and
the curves $\pi (l_j)$ in the affine space ${\mathbb A}^2$ with
coordinates $(x_1, x_2)$ are defined by equations of the form
\begin{equation}
x_1^{m_1}/x_2^{m_2} = c_j ,\,\,\,\,\text{where   } 
m_1=\deg f_2, m_2=\deg f_1 , c_j=\const. \label{eq3.7}
\end{equation}
Put
\begin{equation}
\theta: = f_1^{m_1}/f_2^{m_2}. \label{eq3.8}
\end{equation}
From \eqref{eq3.7} and \eqref{eq3.8} we get
\begin{equation}
\theta(D_1) \ne \theta(D_2)\Rightarrow \pi (l_1) \ne \pi (l_2).
\label{eq3.9}
\end{equation}
By \lemref{lem3.1} with $M = l_1$, from \eqref{eq3.9} we see that
the inequality
\begin{equation}
\theta(P(A, B,C)) \ne \theta(P(A^\prime, B^\prime, C^\prime))
\label{eq3.10}
\end{equation}
implies the dominancy of $P$.

\medskip

{\it Case ${\sf R = A}_2$}. We may identify $L({\sf A_2}, K) =
\mathfrak{sl}(3, K)$. The characteristic polynomial is $\chi
(t)=t^3+pt+q$ where  $p$ and $q$ can be viewed as
$SL_3(K)$-invariant homogeneous polynomials of degrees $2$ and $3$,
respectively. Therefore $p = f_1$, $q = f_2$. We point out triples
$(A, B, C)$, $(A^\prime, B^\prime, C^\prime)$ satisfying inequality
\eqref{eq3.10} which were found by MAGMA:

$$
A=\left(\begin{matrix} 3 & 1 & 0 \\  1 &  -1 & 1 \\ 0 & 1 & -2
\end{matrix}\right), \quad
B=\left(\begin{matrix} 2 & 1 & 5 \\  0 &  4 & -3 \\ 1 & 0 & -6
\end{matrix}\right), \quad
C=\left(\begin{matrix} 0 & 0 & 2 \\  1 &  0 & 3 \\ 0 & 1 & 0
\end{matrix}\right),
$$
$$
A'=\left(\begin{matrix} 8 & 1 & 0 \\  1 &  -1 & 1 \\ 0 & 1 & -7
\end{matrix}\right), \quad
B'=\left(\begin{matrix} 2 & 3 & 5 \\  0 &  4 & -3 \\ 1 & 0 & -6
\end{matrix}\right), \quad
C'=\left(\begin{matrix} 0 & 0 & 2 \\  1 &  0 & 3 \\ 0 & 1 & 0
\end{matrix}\right).
$$

\medskip

{\it Case ${\sf R = B}_2$}. We may identify $L({\sf R}, K) =
\mathfrak{so}(5, K)$. Consider the embedding $\mathfrak{so}(5,
K)\hookrightarrow \mathfrak{sl}(5, K)$ given by identification of
$\mathfrak{so}(5, K)$ with matrices of the form
$$
\left(\begin{matrix}
0 & b & c \\
-c^t & m & n \\
-b^t & p & -m^t
\end{matrix}\right)
$$
where $m$, $n$, $p$ are $2\times 2$-matrices, and $n$, $p$ are
skew-symmetric (see, e.g., \cite[1.2]{Hu}). The characteristic
polynomial is $\chi(t)=t^5+pt^3+qt$ where $p$ and $q$ can be viewed
as $SO_5(K)$-invariant homogeneous polynomials on $\mathfrak{so}(5,
K)$ of degrees $2$ and $4$, respectively. Hence $f_1 = p$, $f_2 =
q$. We point out triples $(A, B, C)$, $(A^\prime, B^\prime,
C^\prime)$ satisfying inequality \eqref{eq3.10} which were found by
MAGMA:

$$
A=\left(\begin{matrix} 0 & 1 & 2 & 3 & 4\\  -3 &  5 & 6 & 0 & 9\\ -4
& 7 & 8 & -9 & 0 \\ -1 & 0 & 10 & -5 & -7 \\ -2 & -10 & 0 & -6 & -8
\end{matrix}\right), \quad
A'=\left(\begin{matrix} 0 & 5 & -6 & -7 & 8\\  7 &  -1 & 2 & 0 & 2\\
-8 & 3 & 4 & -2 & 0 \\ -5 & 0 & -3 & 1 & -3 \\ 6 & 3 & 0 & -2 & -4
\end{matrix}\right), \quad
$$
$$
B=\left(\begin{matrix} 0 & 4 & 1 & 2 & 3\\  -2 &  -8 & 6 & 0 & -9\\
-3 & -9 & 7 & 9 & 0 \\ -4 & 0 & 10 & 8 & 9 \\ -1 & -10 & 0 & -6 & -7
\end{matrix}\right), \quad
B'=\left(\begin{matrix} 0 & 6 & -7 & 10 & -3\\  -10 &  -8 & -6 & 0 &
5\\ 3 & 1 & 2 & -5 & 0 \\ -6 & 0 & -4 & 8 & -1 \\ 7 & 4 & 0 & 6 & -2
\end{matrix}\right), \quad
$$
$$
C=\left(\begin{matrix} 0 & -1 & 2 & -3 & 4\\  3 &  -5 & -6 & 0 &
10\\ -4 & 7 & 8 & -10 & 0 \\ 1 & 0 & 9 & 5 & -7 \\ -2 & -9 & 0 & 6 &
-8
\end{matrix}\right),
C'=\left(\begin{matrix} 0 & -6 & 6 & -3 & 8\\  3 &  7 & 6 & 0 & 11\\
-8 & 7 & 3 & -11 & 0 \\ 6 & 0 & -2 & -7 & -7 \\ -6 & 2 & 0 & -6 & -3
\end{matrix}\right).
$$

\end{Example}

\section{From dominancy to surjectivity} \label{sec:sur}

For some polynomials $P\in \SL_d$ we can say more than in the
preceding section. Namely, we present here several cases where the
map $P\colon \Fg^d\to \Fg$ is surjective.

We start with the following simple observation (parallel to Remark 3
in \cite[\S 1]{Bo2}).

\begin{Proposition} \label{prop:domsur}
Let $P_1(X_1,\dots ,X_{d_1})$, $P_2(Y_1,\dots, Y_{d_2})$ be Lie
polynomials. Let $\Fg$ be a Lie algebra. Suppose that each of the
maps $P_i\colon \Fg^{d_i}\to \Fg$ is dominant. Let $d=d_1+d_2$,
$$P(X_1,\dots ,X_{d_1}, Y_1, \dots ,Y_{d_2})=P_1(X_1,\dots
,X_{d_1})+P_2(Y_1,\dots , Y_{d_2}).$$ Then the map $P\colon \Fg^d\to
\Fg$ is surjective.
\end{Proposition}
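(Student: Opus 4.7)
The plan is to exploit the fact that $\Fg$, as an affine space, is irreducible, so any two non-empty Zariski open subsets of $\Fg$ must intersect. The key observation is that the image of a dominant morphism into $\Fg$ contains a non-empty open subset (cf.\ the discussion in \ref{subsec:dom}), and the operation $u\mapsto A-u$ is a regular automorphism of $\Fg$ for every fixed $A$.

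First I would fix an arbitrary element $A\in\Fg$; the aim is to produce preimages realising $P(\ldots)=A$. By the hypothesis that $P_1\colon\Fg^{d_1}\to\Fg$ is dominant, the image $P_1(\Fg^{d_1})$ contains a non-empty Zariski open subset $U_1\subseteq\Fg$. Similarly, $P_2(\Fg^{d_2})$ contains a non-empty open subset $U_2\subseteq\Fg$. Consider the translate $V_A:=A-U_2=\{A-u\mid u\in U_2\}$. Since the translation $u\mapsto A-u$ is an isomorphism of the underlying variety $\Fg\cong\mathbb{A}^{\dim\Fg}$, the set $V_A$ is again a non-empty open subset of $\Fg$.

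Next I would invoke irreducibility: $\Fg$ is an affine space, hence irreducible, so the two non-empty open sets $U_1$ and $V_A$ have non-empty intersection. Pick any $w\in U_1\cap V_A$. Then $w=P_1(a_1,\ldots,a_{d_1})$ for some $a_i\in\Fg$, and $A-w=P_2(b_1,\ldots,b_{d_2})$ for some $b_j\in\Fg$. Adding these two identities gives
$$P(a_1,\ldots,a_{d_1},b_1,\ldots,b_{d_2})=P_1(a_1,\ldots,a_{d_1})+P_2(b_1,\ldots,b_{d_2})=A,$$
which proves surjectivity since $A$ was arbitrary.

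There is no real obstacle here; the whole argument is soft and relies only on (i) the fact that a dominant morphism to an irreducible variety has image containing a non-empty open set, (ii) invariance of the class of open sets under affine translations on $\Fg$, and (iii) irreducibility of $\Fg$. The only point to be mildly careful about is that dominancy is (a priori) a statement about $K$-points when $K$ is algebraically closed; if one wants the result over an arbitrary field one should either pass to the algebraic closure and argue with Zariski-density over $\bar K$, or remark (as in Remark~3 of \cite[\S 1]{Bo2}) that the argument really uses nothing more than the set-theoretic density of the images in $\Fg$, which is guaranteed by dominancy after extension of scalars.
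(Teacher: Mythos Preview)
Your argument is correct and is essentially the same as the paper's own proof: obtain non-empty open subsets $U_i\subseteq P_i(\Fg^{d_i})$ from dominancy and then use irreducibility of $\Fg$ to conclude $U_1+U_2=\Fg$ (the paper cites \cite[Chapter~I, \S~1, 1.3]{Bo3} for this last step, which you have spelled out via the translate $A-U_2$). There is nothing to add.
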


\begin{proof}
As the underlying variety of $\Fg$ is irreducible, the image of each
of the dominant morphisms $P_i$ $(i=1,2)$ contains a non-empty open
subset $U_i$. It remains to notice that $U_1+U_2=\Fg$ (see, e.g.,
\cite[Chapter~I, \S~1, 1.3]{Bo3}).
\end{proof}

Let us now prove surjectivity for some special maps, which are
linear in one variable.

\begin{Definition}
We call $$E_m(X, Y) =\underbrace{[[\dots [}_{m \text{
times}}X,Y],Y],\ldots, Y]\in \SL_2$$ an Engel polynomial of degree
$(m+1)$. We call $$\sum_{i = 1}^{m}a_i E_i(X,Y)\in \SL_2,$$ where
$a_i \in K$, a generalized Engel polynomial.
\end{Definition}

\begin{Theorem} \label{th:Engel}
Let $P(X,Y)\in \SL_2$ be a generalized Engel polynomial of degree
$(m+1)$, and let $P\colon  L({\sf R}, K)^2\rightarrow L({\sf R}, K)$
be the corresponding map of Chevalley algebras. If $\sf R$ does not
contain irreducible components of types listed in $\eqref{eq2.2}$
and $|K|>m|R|$, then the image of $P$ contains
$$(L({\sf R}, K)\setminus Z(L({\sf R}, K))\cup \{0\}.$$ Moreover, if
$P$ is an Engel polynomial, then the same is true under the
assumption $|K|>|R^+|$.
\end{Theorem}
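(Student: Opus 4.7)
The strategy is to linearize the problem. Observe that the generalized Engel polynomial can be rewritten as $P(X,Y)=q(\operatorname{ad} Y)(X)$ where $q(t)=\sum_{i=1}^m a_i t^i\in K[t]$ is a polynomial of degree $\le m$ with $q(0)=0$. Thus for any fixed $Y$, the map $X\mapsto P(X,Y)$ is $K$-linear. The plan is to pick a good $Y\in H$ so that this linear map is as surjective as possible, and then use Brown's lemma (the $h=0$ case of \ref{2.5}.f) to reduce every non-central target to the image.

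More precisely, $0\in \operatorname{Im} P$ trivially via $P(0,Y)=0$. Now let $A\in L({\sf R},K)$ be non-central. First I would invoke \ref{2.5}.f with $h=0$ to produce $g\in G$ with $u:=g(A)\in U$; note $u\ne 0$ because $A$ is non-central and $G$-orbits preserve centrality. Next I would choose $Y_0\in H$ such that $q(\beta(Y_0))\ne 0$ for every $\beta\in {\sf R}$. Since $q$ has at most $m$ roots in $K$, the set $S\subset K$ of roots of $q$ satisfies $|S|\le m$, so \ref{2.2}.e under the hypothesis $|K|>m|{\sf R}|$ supplies an $Y_0\in H$ with $\beta(Y_0)\notin S$ for all $\beta\in {\sf R}$. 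Because $Y_0\in H$, the operator $\operatorname{ad} Y_0$ is diagonal, acting as $0$ on $H$ and as multiplication by $\beta(Y_0)$ on each root space $Ke_\beta$. Consequently $q(\operatorname{ad} Y_0)$ is $0$ on $H$ and acts as multiplication by the nonzero scalar $q(\beta(Y_0))$ on each $Ke_\beta$; in particular it restricts to a $K$-linear isomorphism of $U$ onto itself. Hence there exists $X_0\in U$ with $q(\operatorname{ad} Y_0)(X_0)=u$, i.e., $P(X_0,Y_0)=u$. Applying $g^{-1}\in G\subseteq \Aut(L({\sf R},K))$ and using $G$-equivariance of $P$, one gets $P(g^{-1}(X_0),g^{-1}(Y_0))=g^{-1}(u)=A$, as required.

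The Engel case is a direct specialization: when $P=E_m$ one has $q(t)=t^m$, so the only root of $q$ is $0$ and the condition $q(\beta(Y_0))\ne 0$ for every $\beta$ is exactly regularity of $Y_0$. By the first assertion of \ref{2.2}.e, $H$ contains a regular element as soon as $|K|\ge |{\sf R}^+|$, which is implied by the hypothesis $|K|>|{\sf R}^+|$; the rest of the argument is unchanged. The main point to verify is that the operator $q(\operatorname{ad} Y_0)|_U$ is invertible, which is where the bound $|K|>m|{\sf R}|$ (respectively $|K|>|{\sf R}^+|$) is used; the exclusion of the cases in \eqref{eq2.2} is needed precisely to guarantee that $H$ is a genuine Cartan subalgebra and that \ref{2.5}.f applies, so that the reduction via Brown's lemma is legitimate.
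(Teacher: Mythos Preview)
Your argument is correct and follows exactly the paper's approach: fix $Y_0=h\in H$ so that the linear operator $X\mapsto P(X,h)$, which is diagonal in the Chevalley basis with eigenvalue $f(\beta(h))$ on $Ke_\beta$ (the paper writes $f(t)=\sum(-1)^ia_it^i$, your $q$ differs only by the harmless sign $t\mapsto -t$), has image equal to $U$, and then conclude via the $h=0$ case of \ref{2.5}.{\bf f} (Brown's Lemma~II) together with $G$-equivariance. The Engel specialization and the use of \ref{2.2}.{\bf e} to obtain the required $h$ under the stated cardinality bounds are likewise identical to the paper's proof.
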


\begin{proof}
Since $|K| > m|R|$, for any chosen $S\subset K$ of size $m$ there is
$h \in H$ such that $\beta (h) \notin S$ for all $\beta\in\sf R$
(see \ref{2.2}.{\bf e}). Further, for every $h\in H$ the map
$P_h\colon L({\sf R}, K)\rightarrow L({\sf R}, K)$, given by
$X\mapsto P(X,h)$, is a semisimple linear operator on $L({\sf R},
K)$ which is diagonalizable in the Chevalley basis. Each $h_\alpha$
is its eigenvector with zero eigenvalue. Further, there is a degree
$m$ polynomial $f \in K[t]$ such that $P(e_\beta, h) =
f(\beta(h))e_\beta$ for every $\beta \in {\sf R}$. (Explicitly, one
can take $f=\sum_{i=1}^m(-1)^ia_it^i$.) Define $S$ as the set of
roots of $f$ in $K$. Then $f(\beta(h)) \ne0$ for every $\beta\in \sf
R$, and therefore $\operatorname{Im}(P_h) = U$. Now the statement
follows from \ref{2.5}.{\bf f.}

If $P$ is an Engel polynomial of degree $(m+1)$, then one can take
$f = x^{m}$, and therefore $S = \{0\}$, that is, $h$ is a regular
element.  Once again, we can use \ref{2.5}.{\bf f.}
\end{proof}

\begin{cor} \label{cor:main}
Let $P= P(X,Y)\in \SL_2$ be a generalized Engel polynomial of degree
$(m+1)$, and let $\Fg$ be a simple classical Lie algebra
corresponding to the root system $\sf R$. If $|K| > m|R|$, then the
map $P\colon \Fg^2\rightarrow \Fg$ is surjective. Moreover, if $P$
is an Engel polynomial, the same is true under the assumption
$|K|>|R^+|$.
\end{cor}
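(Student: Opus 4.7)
The plan is to reduce the statement to \thmref{th:Engel} by lifting to the Chevalley algebra. Since $\Fg$ is a simple classical Lie algebra, it is of the form $\Fg = L({\sf R},K)/Z$ where $Z$ denotes the centre, and by the classification recalled in Section \ref{2.3} the irreducible root system $\sf R$ does not appear in list \eqref{eq2.2}. Moreover, the size condition $|K|>m|R|$ (resp. $|K|>|R^+|$) on the ground field is identical in the corollary and in \thmref{th:Engel}, so the hypotheses of the theorem are in force.

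The first step is to check that $P$ descends to the quotient. A generalized Engel polynomial is a linear combination of iterated commutators $E_i(X,Y)$; each such bracket word has no linear term and vanishes whenever one of its two arguments is central. Hence $P$ annihilates $(Z\times L({\sf R},K))\cup (L({\sf R},K)\times Z)$, and the map $P\colon L({\sf R},K)^2\to L({\sf R},K)$ passes through the canonical projection $q\colon L({\sf R},K)\to \Fg$ to give a well-defined map $\bar P\colon \Fg^2\to \Fg$, with $\bar P\circ(q\times q)= q\circ P$.

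The second step is to hit an arbitrary target $\bar a\in\Fg$. If $\bar a=0$, take $\bar X=\bar Y=0$, so that $\bar P(0,0)=0=\bar a$. Otherwise pick any lift $a\in L({\sf R},K)$ of $\bar a$; since $\ker q= Z$, the hypothesis $\bar a\ne 0$ forces $a\in L({\sf R},K)\setminus Z$. By \thmref{th:Engel} the image of $P$ contains $L({\sf R},K)\setminus Z$, so there exist $X,Y\in L({\sf R},K)$ with $P(X,Y)=a$; projecting through $q$ yields $\bar P(q(X),q(Y))=q(a)=\bar a$. This establishes surjectivity of $\bar P$.

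The ``moreover'' part for a genuine Engel polynomial is handled identically, using the sharper hypothesis $|K|>|R^+|$ provided by the second assertion of \thmref{th:Engel}. There is no real obstacle here: the analytic content is entirely in \thmref{th:Engel}, and the passage from the Chevalley algebra to its simple quotient only requires the elementary observation that $P$ has no degree-one term, so that zero targets are trivially achievable and noncentral targets can be lifted without leaving the set $L({\sf R},K)\setminus Z$ covered by the theorem.
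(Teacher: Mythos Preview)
Your argument is correct and is exactly what the paper intends: the corollary is stated without a separate proof, and the implicit deduction from \thmref{th:Engel} by passing to the quotient $\Fg=L({\sf R},K)/Z$ is the same mechanism spelled out in the proof of the dominancy corollary in Section~\ref{sec:dom}. One small wording issue: vanishing of $P$ on $(Z\times L)\cup(L\times Z)$ does not by itself guarantee that $P$ factors through $q\times q$; the fact you actually need (and which is equally easy) is the stronger statement $P(X+z,Y+z')=P(X,Y)$ for all $z,z'\in Z$, which holds because every term of $P$ is an iterated bracket and brackets with central elements vanish.
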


\begin{remark}
Corollary \ref{cor:main} generalizes Theorem 7 of \cite{Th3} where
Question \ref{quest-lie}a was answered in the affirmative for the
words in three variables $P(X,Y,Z)$ of the form $[X,Y,\dots, Y,Z]$
and $\Fg =\Fsl(n)$.
\end{remark}

Our next result shows that one cannot hope to extend surjectivity to
central elements.

\begin{Proposition} \label{prop:noncentral}
Let $P_m(X,Y)\in \SL_2$ be an Engel polynomial of degree $m$, and
let $P\colon  L({\sf R}, K)^2\rightarrow L({\sf R}, K)$ be the
corresponding map of Chevalley algebras. Then for $m$ big enough the
image of $P$ contains no nonzero elements of $Z(L({\sf R}, K))$.
\end{Proposition}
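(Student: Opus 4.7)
The plan is to exploit the Fitting decomposition of the adjoint operator $\operatorname{ad} B$ on the finite-dimensional $K$-vector space $V := L({\sf R}, K)$, and observe that when $m$ exceeds $\dim V$, the image of $(\operatorname{ad} B)^m$ lies in the ``invertible part'' of the Fitting decomposition, which intersects the center trivially.

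More precisely, set $N := \dim_K L({\sf R}, K)$, and fix an arbitrary $B \in L({\sf R}, K)$. Let $T := \operatorname{ad} B \in \operatorname{End}_K(V)$. Fitting's lemma yields a canonical $T$-stable decomposition
\begin{equation*}
V = V_0(T) \oplus V_1(T), \qquad V_0(T) = \ker T^N, \qquad V_1(T) = T^N V,
\end{equation*}
with $T|_{V_0(T)}$ nilpotent and $T|_{V_1(T)}$ a $K$-linear automorphism. In particular, for every $m \geq N$ one has $T^m V = V_1(T)$, and therefore, for any $A \in L({\sf R}, K)$,
\begin{equation*}
P_m(A, B) = (\operatorname{ad} B)^m A \in V_1(\operatorname{ad} B).
\end{equation*}

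On the other hand, every central element $C \in Z(L({\sf R}, K))$ satisfies $[C, B] = 0$, so $C \in \ker(\operatorname{ad} B) \subseteq V_0(\operatorname{ad} B)$. Combining with the previous step, if $P_m(A, B) = C$ is central and $m \geq N$, then $C$ lies in $V_0(\operatorname{ad} B) \cap V_1(\operatorname{ad} B) = \{0\}$.

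Since the bound $m \geq N$ is independent of the choice of $A$ and $B$, this shows that for $m \geq \dim L({\sf R}, K)$ the image of $P_m$ contains no nonzero central elements, which is the required statement. There is essentially no technical obstacle here; the only thing one must verify is the standard Fitting-lemma fact that $\ker T \subseteq V_0(T)$ and $V_0(T) \cap V_1(T) = 0$, which is immediate from the definitions.
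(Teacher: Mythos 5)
Your argument is correct, and it takes a genuinely different (and more economical) route than the paper's. The paper reduces to $K$ algebraically closed, writes $Y=h+y$ with $h\in H$ semisimple, $y\in U^+$ nilpotent, $[h,y]=0$, and splits into two cases: if the semisimple part $h$ commutes with the $U$-part of $X$ it shows by induction that $P_n(X,Y)=[[X,y],y,\dots ,y]$, which vanishes once $n$ exceeds the nilpotency index of $\ad y$; otherwise it runs a delicate induction on $n$, using the height order on ${\sf R}$ and a minimal root $\gamma$ with $\beta(h)\ne 0$ and $f_\gamma\ne 0$, to show that the coefficient of $e_\gamma$ in $P_n(X,Y)$ never dies, so $P_n(X,Y)$ is noncentral for every $n$. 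You bypass all of this: since, up to the harmless sign $(-1)^m$, $P_m(A,B)=(\ad B)^m A$, the Fitting decomposition $V=\ker (\ad B)^N\oplus (\ad B)^N V$ with $N=\dim_K L({\sf R},K)$ immediately places $P_m(A,B)$ in the second summand for $m\ge N$, while any central element lies in $\ker(\ad B)$ and hence in the first; their intersection is zero. Your proof needs no algebraic closure, no Chevalley basis or root combinatorics, gives the explicit uniform bound $m\ge \dim L({\sf R},K)$, and in fact establishes the statement for an arbitrary finite-dimensional Lie algebra over an arbitrary field. What the paper's longer analysis buys is finer information -- for each pair $(X,Y)$ the value is either eventually zero (with a threshold governed by the nilpotency index of the nilpotent part of $\ad Y$, typically much smaller than $\dim L({\sf R},K)$) or noncentral for every degree -- but none of that extra precision is needed for the proposition as stated.
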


\begin{proof} The scheme of the proof is as follows. We distinguish two cases. If
$X$ and $Y$ centralize the same element of a Cartan subalgebra of
$L({\sf R}, K)$, we prove that $P_m(X,Y)=0$ for $m$ big enough.
Otherwise, we use a partial order on $\sf R$ induced by height to
show that $P(X,Y)$ cannot lie in $H$. Here is a detailed argument.

We may assume $K$ algebraically closed. Then, by ``bringing to the
Jordan form'', we may assume $Y = h+y$ where $h \in H$, $y \in U^+$,
$[h,y]=0$. Further, let $X =  h^\prime + x$, $h^\prime \in H$, $x
\in U$.

For brevity, for every $n$ denote $z_n=P_n(X,Y)$.

{\it Case I}. $[h,x] = 0$.

Let us prove that $z_n=P_n(X,y)$ and $[z_n,h]=0$. We use induction
on $n$. For $n=1$ we have $z_1 = [X,h+y] = [X,h] + [X,y].$ Since
$[x,h]=[h',h]=0$, we have $z_1=[X,y]$. Further,
$[z_1,h]=[[X,y],h]=[[h',y],h]+[[x,y],h]$. Since
$[h',h]=[x,h]=[y,h]=0$, each summand equals zero by the Jacobi
identity, so $[z_1,h]=0$.

Assume $z_{n-1}=P_{n-1}(X,y)$ and $[z_{n-1},h]=0$. We have
$$z_n=[z_{n-1},Y]=[z_{n-1},h+y]=[z_{n-1},y]=[P_{n-1}(X,y),y]=P_n(X,y)$$
and $[z_n,h]=[[z_{n-1},y],h]=0$ by the Jacobi identity (because
$[z_{n-1},h]=[y,h]=0$).

Thus we have $P_n(X,Y) = [[X,y],y,\dots ,y]$ which is zero for $n$
big enough because $y$ is nilpotent.

\medskip

{\it Case II}. $[h,x] \ne 0$.

First suppose that $y=0$, i.e. $Y=h$ is semisimple. As $x\ne 0$, we
can write
$$
x = \sum_{\beta\in {\sf R}} f_\beta e_\beta,
$$
where $f_\beta \in K$. Since $[h,x]\ne 0$, there exists $\beta$ such
that $[h, e_\beta] \ne 0$. We now observe that if $f_\beta\ne 0$
then for every $m$ the term of $P_m(X,Y)$ containing $e_\beta$
enters with nonzero coefficient, so $P_m(X,Y)$ belongs to $U$ and
thus does not belong to the centre.

So assume $y\ne 0$ and write
$$y = \sum_{\beta\in {\sf R^+}} p_\beta e_\beta,$$
where $p_\beta \in K$.

Put
$$
{\sf R}_h = \{\beta \in {\sf R}\,\,\,\mid\,\,\,\beta (h) \ne
0\},\,\,\, \hat{{\sf R}}_h = \{\beta \in {\sf
R}\,\,\,\mid\,\,\,\beta (h) = 0\},
$$
$$
{\sf R}_x = \{\beta \in {\sf R}\,\,\,\mid\,\,\,f_\beta  \ne
0\},\,\,\, {\sf R}_y = \{\beta \in {\sf R}\,\,\,\mid\,\,\,p_\beta
\ne 0\}.
$$
All these sets are non-empty, and
$${\sf R}_{h, x} = {\sf R}_h\cap{\sf R}_x \ne \emptyset.$$
We have ${\sf R}_y\subseteq {\sf R}^+$, ${\sf R}_y\subseteq\hat{{\sf
R}}_h$.

Let $\prec$ be the (partial) order on ${\sf R}$ induced by height.
Recall that by definition $\alpha\prec\beta$ if and only if
$\beta-\alpha$ is a sum of positive roots. We fix some minimal
$\gamma$ in ${\sf R}_{h,x}$.

Further, write
$$
P_n(X, Y) = z_n =  \sum_{\beta\in {\sf R}} d_{n,\beta}e_\beta + h_n
$$
where $h_n \in H$, $d_{n, \beta}\in K$.

{\it Claim:}

a) $d_{n, \gamma} \ne 0$;

b) if $d_{n, \delta} \ne 0$ and $\delta \ne \gamma$, then either
$\delta \in \hat{{\sf R}}_h$ or $ \delta \nprec \gamma$.

Evidently, a) is enough to establish the assertion of the
proposition.

Let us prove the claim by induction on $n$. Let first $n=1$. We have
\begin{equation}
[h^\prime, y] = \sum_{\beta\in \hat{\sf R}_h} a_\beta e_\beta, \,\,
[x, y]  = \sum_{\beta\in {\sf R}} b_\beta e_\beta + h_1,\,\, [x,h] =
\sum_{\beta\in {\sf R}_h }c_\beta e_\beta, \label{eq:n=1}
\end{equation}
where $h_1\in H$, and we have  $d_{1,\beta} = a_{\beta} + b_\beta$
or $d_{1,\beta} = b_\beta + c_\beta$.

a) We have $a_\gamma = 0$, $c_\gamma \ne 0$ because $\gamma \in {\sf
R}_{h, x}\subseteq {\sf R}_h.$ Let us prove that $b_{\gamma}=0$.
Assume to the contrary that $b_\gamma \ne 0$. Then from the middle
equality in \eqref{eq:n=1} it follows that there are roots $\alpha
\in {\sf R}_x$ and $\beta \in {\sf R}_y$ such that $[e_\alpha,
e_\beta] = e_\gamma$ (and so $\gamma=\alpha +\beta$). Since
$[h,e_\beta] = 0$ and $[h, e_\gamma]\ne 0$, we have $[h, e_\alpha
]\ne 0$. Hence $\alpha \in {\sf R}_h$ and therefore $\alpha \in {\sf
R}_{h, x} = {\sf R}_h\cap {\sf R}_x.$ Since $\gamma = \alpha
+\beta$, we have the inequality $\alpha \prec \gamma$  because
$\beta$ is a positive root. This is a contradiction with the choice
of $\gamma$ (recall that $\gamma$ is a minimal root in ${\sf R}_{h,
x}$ with respect to the partial order $\prec$). Thus $b_\gamma = 0$
and $d_{1,\gamma} = c_\gamma \ne 0$.

b) Suppose $d_{1, \delta} \ne 0$ and $\delta \notin \hat{{\sf
R}}_h$. Then $\delta \in {\sf R}_h$ and $d_{1, \delta} = b_\delta +
c_\delta$. If $c_\delta \ne 0$, then  $\delta \in {\sf R}_x$. Hence
$\delta \in {\sf R}_{h, x}$ and $\delta\nprec \gamma$ because of the
choice of $\gamma$. If $c_\delta = 0$, then $d_{1, \delta} =
b_\delta \ne 0$. Then $e_\delta = [e_\alpha, e_\beta]$ for some $
\alpha \in {\sf R}_x$, $\beta \in {\sf R}_y$. Since $\delta \in {\sf
R}_h$ (and so $[h,e_{\delta}]\ne 0$) and $\beta \in {\sf R}_y
\subseteq {\hat{\sf R}}_h$ (and so $[h,e_{\beta}]=0$), we have $[h,
e_\alpha]\ne 0\Rightarrow \alpha \in {\sf R}_h\Rightarrow \alpha \in
{\sf R}_{h,x}$. Suppose that $\delta = \alpha + \beta \prec \gamma.$
Then $\alpha \prec \gamma$ which is again a contradiction with the
choice of $\gamma$. Hence $\delta \nprec \gamma$.

Let us now assume

a) $d_{n-1, \gamma} \ne 0$;

b) if $d_{n-1, \delta} \ne 0$ and $\delta \ne \gamma$, then either
$\delta \in \hat{{\sf R}}_h$ or $\delta\nprec \gamma$,

and prove the same assertions for $n$.

Consider
$$
z_n = [z_{n-1},h+y] = \underbrace{\sum_{\beta\in {\sf R}}
d_{n-1,\beta }[e_\beta, h]}_{I} + \underbrace{\sum_{\beta\in {\sf
R}} d_{n-1,\beta }[e_\beta, y]}_{II} +\underbrace{[h_{n-1},
y]}_{III}.
$$
The induction hypotheses imply that
$$
z_n = \underbrace{\sum_{\delta \in \hat{\sf R}_h}q_\delta
e_\delta}_{\spadesuit} + s_\gamma e_\gamma +
\underbrace{\sum_{\delta \in {\sf R}_h, \delta \nprec\gamma}s_\delta
e_\delta}_{\heartsuit} +h_n,
$$
where $h_n\in H$ and $s_\gamma \ne 0$. Indeed, sum I has only terms
of types $\heartsuit$ and the term $s_\gamma e_\gamma \ne 0$.
Further, sum II has terms of types $\spadesuit$ and $\heartsuit$ and
elements of $H$. Sum III has only terms of type $\spadesuit$ because
${\sf R}_y\subseteq\hat{{\sf R}}_h$. Thus conditions a) and b) hold
for $z_n$, and $z_n = P(X, Y) \notin Z(L({\sf R}, K))$.
\end{proof}

\begin{remark}
Suppose we are in one of the exceptional cases listed in
\eqref{eq2.2}. Let us exclude abelian and solvable cases 1), 2) of
Section \ref{2.3}. Also in case 6) in \thmref{th:Engel} we may
consider the Lie algebra $[L({\sf R}, K),L({\sf R}, K)]$ instead of
$L({\sf R}, K)$. In cases 3),4), 5) the algebra $L({\sf R}, K)$
contains an ideal $I$ (generated by short roots) such that the
quotient $\bar L=L({\sf R}, K)/I$ is not on list \eqref{eq2.2}, and
therefore the assertion of \thmref{th:Engel} on surjectivity of $P$
holds for $\bar L$.
\end{remark}

\begin{remark}
In the group case the phenomenon of Proposition
\ref{prop:noncentral} can be observed already for $m=1$: some
quasisimple groups contain central elements that are not
commutators, see \cite{Th1}, \cite{Dj} for infinite groups and
\cite{Bl} for finite groups.
\end{remark}

\begin{Example}
In the following example we show that non-Engel maps are not
necessarily surjective. Let $$P=P(X, Y) = [[[X,Y], X], [X, Y],
Y]]\colon \mathfrak{sl}(2, K)\times \mathfrak{sl}(2, K) \rightarrow
\mathfrak{sl}(2, K)$$ where $\operatorname{char}(K) \ne 2$ and $K$
is an algebraically closed field.

Let $\{e,f,h\}$ be the standard basis of $\Fsl(2)$: $[e,f]=h$,
$[h,e]=2e$, $[h,f]=-2f$. First note that
$P(X+mY,Y)=P(X,Y+mX)=P(X,Y)$. Therefore we may assume that
$X=ae+bf$, $Y=cf+dh$. A straightforward calculation then gives
$$
P(X,Y)=4a^2cs h+8a^2ds e + 8abds f
$$
where $s=4bd^2-ac^2$. This implies that in $\operatorname{Im}(P)$
there are no elements of the form $me$ or $mf$ with $m\ne 0$.

\end{Example}

\section{Concluding remarks and possible generalizations} \label{sec:gen}

\begin{remark} \label{rem:ass}
The method used in the proof of \thmref{th:main} (which goes back to
\cite{DS} and \cite{Bo2}) is applicable to the problem of dominancy
of polynomial maps on associative matrix algebras (which is
attributed to Kaplansky, see \cite{KBMR} and references therein).
More precisely, let $P(X_1, \dots ,X_d)\in K\left<X_1, \dots,
X_d\right>$ be an associative, noncommutative polynomial (i.e., an
element of the finitely generated free associative algebra), and let
$P\colon M_n(K)^d\rightarrow M_n(K)$ denote the corresponding map.
Then the same inductive argument as in the proof of \thmref{th:main}
shows that if $P(X_1, \dots ,X_d)$ is not identically zero on
$M_1(K)^d$ then the map $P$ is dominant for all $n$. In the
situation where $P(X_1, \dots ,X_d)$ is identically zero on $K^d$,
one can consider the induction base $n=2$ and prove that if the
restriction of $P$ to $M_2(K)^d$ is dominant then so is $P$. The
assumption made above holds, for instance, for any semi-homogeneous,
non-central polynomial having at least one $2\times 2$-matrix with
nonzero trace among its values \cite[Theorem 1]{KBMR}. If, under the
same assumptions on $P$, $\operatorname{Im}(P)$ lies in
$\mathfrak{sl}(n,K)$, then $\overline{\operatorname{Im}(P)} =
\mathfrak{sl}(n, K)$.
\end{remark}

\begin{remark}
It would be interesting to consider maps $P$ with some fixed $X_i =
A_i$. Then one could find an approach to the dominancy calculating
the differential map of $P$.
\end{remark}

\begin{remark} \label{rem:multiple}
It would be interesting to consider a more general set-up when we
have a polynomial map $P\colon L^d\rightarrow L^s$. In \cite{GR}
some dominancy results were obtained for the multiple commutator map
$P\colon L\times L^d\rightarrow L^d$ given by the formula $P(X, X_1,
\dots, X_d) = ([X, X_1], \dots, [X, X_d])$.
\end{remark}

\begin{remark} \label{rem:doubleword}
In a similar spirit, one can consider generalized word maps $w\colon
G^d\to G^s$ on simple groups. Apart from \cite{GR}, see also a
discussion of a particular case $w=(w_1,w_2)\colon G^2\to G^2$ in
\cite[Problem~1]{BGGT}.
\end{remark}

\begin{remark} \label{rem:rings}
One could try to extend some of results of this paper to the case
where the ground field is replaced with some sufficiently good ring.
One has to be careful in view of \cite{RR}: there are rings $R$ such
that not every element of $\Fsl (n,R)$ is a commutator.
\end{remark}

\begin{remark}
It would be interesting to understand the situation with {\it
infinite-dimensional} simple Lie algebras (as well as with
finite-dimensional algebras of Cartan type in positive
characteristics). The first question is whether every element of
such an algebra can be represented as a Lie product of two other
elements. Note that the question on the existence of a simple {\it
group} not every element of which is a commutator remained open for
a long time. First examples of such groups appeared in geometric
context \cite{BG}, where the groups under consideration were
infinitely generated; later on there were constructed finitely
generated groups with the same property \cite{Mu}. These are
counter-examples in very strong sense: the so-called commutator
width, defined as supremum of the minimal number of commutators
needed for a representation of a given element as a product of
commutators, may be arbitrarily large or even infinite
\cite[Theorems 4 and 5]{Mu}.
\end{remark}

\begin{remark}
Apparently, one cannot hope to extend dominancy and surjectivity
results to polynomial maps on algebras which are far from simple.
Indeed, in the group case the simplicity assumption is essential. As
pointed out to us by D.~Calegari, if $G$ is non-elementary
word-hyperbolic group and $w$ is a nontrivial word, then one cannot
hope to have a generic element of $G$ in the image of the word map
induced by $w$ (this can be deduced from \cite{CM}).
\end{remark}

\begin{remark} \label{rem:nonassoc}
One can ask questions similar to Questions \ref{quest-lie} and
\ref{quest-group} for other classes of algebras (beyond groups, Lie
algebras and associative algebras). The interested reader may refer
to \cite{Gordo} for the case of values of commutators and
associators on alternative and Jordan algebras.
\end{remark}

\noindent{\it Acknowledgements}. A substantial part of this work was
done during the visits of the first three coauthors to the MPIM
(Bonn) in 2010. Bandman, Kunyavski\u\i \ and Plotkin were supported
in part by the Minerva foundation through the Emmy Noether Research
Institute. Gordeev was supported in part by RFFI research grants
08-01-00756-a, 10-01-90016-Bel-a.

The support of these institutions is gratefully appreciated.

We also thank M.~Agranovsky, D.~Calegari, M.~Gorelik, A.~Joseph,
A.~Kanel-Belov, and A.~Premet for helpful discussions and
correspondence.

\providecommand{\bysame}{\leavevmode\hbox
to3em{\hrulefill}\thinspace}

 \end{document}